\theoremstyle{plain}
\newtheorem{theorem}{Theorem}
\newtheorem*{theorem*}{Theorem}
\newtheorem*{lemma*}{Lemma}
\theoremstyle{definition}
\newtheorem*{definition*}{Definition}
\theoremstyle{remark}
\newtheorem{remark}{Remark}
\newtheorem*{remark*}{Remark}
\newtheorem*{statement*}{Statement}
\begin{document}
\title[Relationships between positive and sign-variable expansions]{On one map between singular expansions}

\author{Symon Serbenyuk}

\subjclass[2010]{11K55, 11J72, 26A27, 11B34,  39B22, 39B72, 26A30, 11B34.}

\keywords{ Salem function, systems of functional equations,  complicated local structure}

\maketitle
\text{\emph{simon6@ukr.net}}\\
\text{\emph{Kharkiv National University of Internal Affairs,}}\\
\text{\emph{L.~Landau avenue, 27, Kharkiv, 61080, Ukraine}}
\begin{abstract}

The present article deals with properties of one map between two expansions of real numbers of the Salem type. Differential, integral, and other properties of the function were considered.  
\end{abstract}

\section{Introduction}

 In      \cite{Salem1943}, Salem modeled a singular function of the form
$$
S(x)=S\left(\Delta^q _{i_1i_2...i_k...}\right)=\beta_{i_1}+ \sum^{\infty} _{k=2} {\left(\beta_{i_k}\prod^{k-1} _{i=1}{p_i}\right)}:=\Delta^{P} _{i_1i_2...i_k...}\in [0,1],
$$
where $q>1$ is a fixed positive integer,
$$
\sum^{\infty} _{k=1}{\frac{i_k}{q^k}}:=\Delta^q _{i_1i_2...i_k...}=x\in [0,1]
$$
 is the $q$-ary representation of a number $x$ and $i_k \in \{0, 1, \dots , q-1\}$, as well as  $P=(p_0, p_1, \dots , p_{q-1})$ is a finite fixed sequence of numbers such that the conditions  $p_j>0$, $j=\overline{0, q-1}$,  and $p_0+p_1+\dots + p_{q-1}=1$ hold. Also,
$$
\beta_{i_k}=\begin{cases}
0&\text{if $i_{k}=0$}\\
\sum^{i_{k}-1} _{l=0} {p_l}&\text{if $i_{k}\ne 0$.}
\end{cases}
$$
 In the case when $q=2$, this function is one of the simplest examples of singular functions. Let us note that generalizations of the Salem function can be non-differentiable functions or those that do not have a derivative on a certain set.  Many researches are devoted to the Salem function and its generalizations were  (for example, see \cite{ACFS2017, Kawamura2010, Symon2015, Symon2017, Symon2019} and references in these papers).

One can note an analytical approach for modelling of the Salem function by the classical definition ofa  distribution function.  
Let $\eta$ be a random variable, that defined by the $q$-ary representation
$$
\eta= \frac{\xi_1}{q}+\frac{\xi_2}{q^2}+\frac{\xi_3}{q^3}+\dots +\frac{\xi_{k}}{q^{k}}+\dots   = \Delta^{q} _{\xi_1\xi_2...\xi_{k}...},
$$
where $\xi_k=i_k$
and digits $\xi_k$ $(k=1,2,3, \dots )$ are random and taking the values $0,1,\dots ,q-1$ with positive probabilities ${p}_{0}, {p}_{1}, \dots , {p}_{q-1}$. That is $\xi_k$ are independent and  $P\{\xi_k=i_k\}={p}_{i_k}$, $i_k \in \{0, 1, \dots , q-1\}$.

From the definition of a distribution function and the following expressions 
$$
\{\eta<x\}=\{\xi_1<\alpha_1(x)\}\cup\{\xi_1=i_1(x),\xi_2<i_2(x)\}\cup \ldots 
$$
$$
\cup\{\xi_1=i_1(x),\xi_2=i_2(x),\dots ,\xi_{k-1}=i_{k-1}(x), \xi_{k}<i_{k}(x)\}\cup \dots,
$$
$$
P\{\xi_1=i_1(x),\xi_2=i_2(x),\dots ,\xi_{k-1}=i_{k-1}(x), \xi_{k}<i_{k}(x)\}
=\beta_{i_{k}(x)}\prod^{k-1} _{j=1} {{p}_{i_{j}(x)}}.
$$
Whence the following  is true: \emph{the distribution function  ${f}_{\eta}$ of the random variable $\eta$ can be represented in the following form
$$
{f}_{\eta}(x)=\begin{cases}
0&\text{whenever $x< 0$}\\
\beta_{i_1(x)}+\sum^{\infty} _{k=2} {\left({\beta}_{i_k(x)} \prod^{k-1} _{j=1} {{p}_{i_j(x)}}\right)}&\text{whenever $0 \le x<1$}\\
1&\text{whenever $x\ge 1$,}
\end{cases}
$$
where ${p}_{i_{j(x)}}>0$.}

In addition, the function
$$ 
{f}(x)=\beta_{i_1(x)}+\sum^{\infty} _{n=2} {\left({\beta}_{i_n(x)}\prod^{n-1} _{j=1} {{p}_{i_j(x)}}\right)},
$$
can be used as a representation of numbers from $[0,1]$. That is,
$$
x=\Delta^{P} _{i_1(x)i_2(x)...i_k(x)...}=\beta_{i_1(x)}+\sum^{\infty} _{k=2} {\left({\beta}_{i_k(x)}\prod^{n-1} _{j=1} {{p}_{i_j(x)}}\right)},
$$
where $P=\{p_0,p_1,\dots , p_{s-1}\}$, $p_0+p_1+\dots+p_{s-1}=1$, and $p_i>0$ for all $i=\overline{0,s-1}$. The last-mentioned representation is \emph{the P-representation of numbers from $[0,1]$}.

As noted in the paper~\cite{Symon2023-numeral-systems}, there exists the tendency to model and to investigate  alternating expansions of real numbers under the condition that  corresponding positive expansions of real numbers are well-known. Beta- (\cite{Renyi1957}) and (- beta)-expansions (\cite{IS2009}),  positive (\cite{15}) and alternating (\cite{10}) L\"uroth series, as well as  positive and alternating Engel series~(for example, see \cite{Engel1, Engel 3} and references in \cite{Engel2, Engel4}) are such examples.  

According to the mentioned tendency, one can consider an alternating expansion which more general case was introduced in~\cite{2016Serbeyuk-KNU}:
\begin{equation}
\label{def: nega-Q}
[0,1]\ni x= \sum^{i_1-1} _{i=0}{p_{i}}+\sum^{\infty} _{k=2}{\left((-1)^{k-1}\tilde \delta_{i_k}\prod^{k-1} _{j=1}{\tilde p_{i_j}}\right)}+\sum^{\infty} _{k=1}{\left(\prod^{2k-1} _{j=1}{\tilde q_{i_j}}\right)},
\end{equation}
where
$$
\tilde \delta_{i_{k}}=\begin{cases}
1,&\text{if $k$ is  even  and $i_{k}=q-1$}\\
\sum^{q-1} _{i=q-1-i_{k}} {p_{i}},&\text{if $k$ is  even  and $i_{k}\ne q-1$}\\
0,&\text{if $k$ is an odd number  and $i_{k}=0$}\\
\sum^{i_k-1} _{i=0}{p_{i}},&\text{if $k$ is odd and $i_{k}\ne0$,}\\
\end{cases}
$$
and the first sum in  expression  \eqref{def: nega-Q} equals  $0$, if  $i_1=0$.  

An expansion of a number  $x$ by  series \eqref{def: nega-Q} is called  \emph{the nega-P-expansion of  $x$} and denotes by  $\Delta^{-P} _{i_1i_2...i_k...}$. The last-mentioned notation is called   \emph{the nega-P-representation of $x$} \cite{2016Serbeyuk-KNU}.

 Numbers from  some countable subset of $[0,1]$ have two different nega-P-representations, i.e., 
$$
\Delta^{-P} _{i_1i_2...i_{k-1}i_k[q-1]0[q-1]0[q-1]...}=\Delta^{-P} _{i_1i_2...i_{k-1}[i_k-1]0[q-1]0[q-1]...}, ~i_k\ne 0.
$$
These numbers are called  \emph{nega-P-rationals} and the rest of the numbers from $[0,1]$ are called  \emph{nega-P-irrationals}.

Let us remark that the following relationship is true:
$$
x=\Delta^{-P} _{i_1i_2...i_k...} \equiv \Delta^{P} _{i_1[q-1-i_2]i_3...i_{2k-1}[q-1-i_{2k}]...}.
$$
That is, we have
$$
\beta_{i_1}+\sum^{\infty} _{k=2} {\left({\tilde\beta}_{i_k}\prod^{k-1} _{j=1} {{\tilde p}_{i_j}}\right)},
$$
where
$$
\tilde  p_{i_k}=\begin{cases}
p_{i_k}&\text{if  $k$ is odd}\\
p_{q-1-i_k}&\text{if $k$ is even}
\end{cases}
$$
and
$$
\tilde\beta_{i_k}=\begin{cases}
\beta_{i_k}&\text{if  $k$ is odd}\\
\beta_{q-1-i_k}&\text{if $k$ is even}.
\end{cases}
$$

Investigations of related positive and alternating expansions of real numbers are useful for modelling and studying of  pathological (see \cite{Wikipedia-pathology}) mathematical objects (in $\mathbb R^1$, fractals (see \cite{Bunde1994, Falconer1997, Falconer2004, Mandelbrot1977, Mandelbrot1999, Moran1946, sets1, sets2, sets} and references therein),  singular (for example, \cite{{Salem1943}, {Zamfirescu1981}, {Minkowski}, {S.Serbenyuk 2017}}),  nowhere monotonic \cite{Symon2017, Symon2019}, and nowhere differentiable functions  (for example, see \cite{{Bush1952}, {Serbenyuk-2016}}, etc.)).  An interest in these functions can be explained by their  connection with modelling of  real objects and  processes in  economics, physics, and technology, etc.,  as well as with different areas of mathematics (for example, see~\cite{BK2000, ACFS2011, Kruppel2009, OSS1995,    Symon21, Symon21-1, Sumi2009, Takayasu1984, TAS1993, Symon2021}). 

Let  $2<q$  be a fixed positive integer, $A=\{0,1,\dots ,q-1\}$, 
$A_0=A \setminus \{0\}=\{1,2,\dots , q -1\}$,  and
$$
 L :=  (A_0)^{\infty}= (A_0) \times  (A_0) \times  (A_0)\times\dots  
$$
be the space of one-sided sequences of  elements of $ A_0$.

As an example of modifications of properties of fractals in various numeral systems, one can note the set  $\mathbb S_{(P,u)}$
$$
\mathbb S_{(P,u)}\equiv\left\{x: x= \Delta^{P}_{{\underbrace{u...u}_{i_1-1}} i_1{\underbrace{u...u}_{i_2 -1}}i_2 ...{\underbrace{u...u}_{ i_k -1}}\alpha_k...},  (\alpha_k) \in L, \alpha_k \ne u, \alpha_k \ne 0 \right\}, 
$$
where $u=\overline{0,q-1}$, the parameters $u$ and $q$ are fixed for the set $\mathbb  S_{(P,u)}$. 
and the same set $\mathbb S_{(P,u)}$ in terms of corresponding alternating expansion
$$
\mathbb S_{(-P,u)}\equiv\left\{x: x= \Delta^{-P}_{{\underbrace{u...u}_{i_1-1}} i_1{\underbrace{u...u}_{i_2 -1}}i_2 ...{\underbrace{u...u}_{ i_k -1}}\alpha_k...},  (\alpha_k) \in L, \alpha_k \ne u, \alpha_k \ne 0 \right\}.
$$
The next theorems formulate that the self-similar fractal $\mathbb S_{(P,u)}$ transforms into the non-self-similar fractal $\mathbb S_{(-P,u)}$
under the map 
$$
g: x=\Delta^{P} _{i_1i_2...i_k...} \to \Delta^{-P} _{i_1i_2...i_k...} =y=g(x).
$$

\begin{theorem}{\cite{sets}.}
\label{th: S(p,u)}
The set $\mathbb S_{(P,u)}$ is an uncountable, perfect, and nowhere dense set of zero Lebesgue measure and also is  a self-similar fractal whose Hausdorff dimension $\alpha_0 (\mathbb S_{(P,u)})$  satisfies the following equation 
$$
\sum _{i\in A_0\setminus\{u\}} {\left(p_ip^{i-1} _u\right)^{\alpha_0}}=1.
$$
\end{theorem}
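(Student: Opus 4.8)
The plan is to realize $\mathbb{S}_{(P,u)}$ as the attractor of a finite iterated function system of contracting similarities and then invoke Moran's theorem. The preliminary observation is that prepending a digit to a $P$-expansion is affine: for any $c\in A$ and any $y\in[0,1]$ with $P$-expansion $\Delta^{P}_{y_1y_2\ldots}$ one has $\Delta^{P}_{cy_1y_2\ldots}=\beta_c+p_c\,\Delta^{P}_{y_1y_2\ldots}$. Iterating, for each $i\in A_0\setminus\{u\}$ put $w_i:=\underbrace{u\ldots u}_{i-1}\,i$ (a word of length $i$ over $A$) and define $f_i\colon[0,1]\to[0,1]$ by $f_i(x)=\Delta^{P}_{w_i\,x}$; then $f_i(x)=a_i+r_ix$ is a similarity with ratio $r_i=p_u^{\,i-1}p_i\in(0,1)$ carrying $[0,1]$ onto the closed cylinder $\overline{\Delta^{P}_{w_i}}$. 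The block structure in the definition of $\mathbb{S}_{(P,u)}$ says exactly that $x\in\mathbb{S}_{(P,u)}$ iff its $P$-expansion is the concatenation $w_{\alpha_1}w_{\alpha_2}\ldots$ for some $(\alpha_k)$ with $\alpha_k\in A_0\setminus\{u\}$; since $\Delta^{P}_{w_{\alpha_1}w_{\alpha_2}\ldots}=\lim_{n}f_{\alpha_1}\circ\cdots\circ f_{\alpha_n}(0)$, the set $\mathbb{S}_{(P,u)}$ is precisely the attractor of the IFS $\{f_i:i\in A_0\setminus\{u\}\}$; in particular it is non-empty and compact and satisfies $\mathbb{S}_{(P,u)}=\bigcup_{i\in A_0\setminus\{u\}}f_i\bigl(\mathbb{S}_{(P,u)}\bigr)$, the asserted self-similarity.

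I would then verify the open set condition with $U=(0,1)$. Each $f_i(U)$ is the interior of $\Delta^{P}_{w_i}$, which lies in $(0,1)$ since $w_i$ is neither the all-$0$ word (its last digit is $i\ne 0$) nor the all-$(q-1)$ word (as $i\ne u$, not all its digits equal $q-1$). For $i<j$ in $A_0\setminus\{u\}$ the length-$i$ prefix of $w_j$ is $\underbrace{u\ldots u}_{i}$ while $w_i=\underbrace{u\ldots u}_{i-1}\,i$; these length-$i$ words differ in the $i$-th place ($u$ against $i$, and $u\ne i$), so $\Delta^{P}_{w_i}$ and $\Delta^{P}_{w_j}$ lie in distinct length-$i$ $P$-cylinders and hence have disjoint interiors, whence $f_i(U)\cap f_j(U)=\emptyset$. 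Thus the OSC holds and Moran's theorem applies: the attractor is a self-similar set, $0<\mathcal H^{\alpha_0}(\mathbb{S}_{(P,u)})<\infty$, and $\alpha_0=\dim_H\mathbb{S}_{(P,u)}$ is the unique root of $\sum_{i\in A_0\setminus\{u\}}r_i^{\,t}=1$, i.e. of $\sum_{i\in A_0\setminus\{u\}}\bigl(p_ip_u^{\,i-1}\bigr)^{\alpha_0}=1$.

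The remaining assertions are then short. Since $0\notin A_0\setminus\{u\}$ and $p_u<1$, we get $\sum_{i\in A_0\setminus\{u\}}r_i\le\sum_{i=1}^{q-1}p_i=1-p_0<1$, so the $n$-th level cover $\bigcup_{(\alpha_1,\ldots,\alpha_n)}\Delta^{P}_{w_{\alpha_1}\ldots w_{\alpha_n}}$ has total length $\bigl(\sum_i r_i\bigr)^{n}\le(1-p_0)^{n}\to0$; hence $\mathbb{S}_{(P,u)}$ is Lebesgue-null (equivalently $\alpha_0<1$), and a compact null set has empty interior, so it is nowhere dense. It is uncountable and perfect because distinct codings $(\alpha_k)$ give distinct points (by the same cylinder disjointness used for the OSC), so $\mathbb{S}_{(P,u)}$ has the cardinality of the continuum, and any $\Delta^{P}_{w_{\alpha_1}w_{\alpha_2}\ldots}$ is the limit, as $n\to\infty$, of the points obtained by replacing the block letter $\alpha_n$ by another admissible letter, so there are no isolated points. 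Being a closed, nowhere dense subset of $\mathbb R$ it is totally disconnected, hence of topological dimension $0<\alpha_0$, so it is a genuine self-similar fractal.

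The only step that really needs care — and the main obstacle — is the first one: checking that the prepend-a-word operation is affine with contraction ratio exactly $p_u^{\,i-1}p_i$, and, more importantly, that the block coding $(\alpha_k)\mapsto\Delta^{P}_{w_{\alpha_1}w_{\alpha_2}\ldots}$ identifies $\mathbb{S}_{(P,u)}$ with the attractor on the nose, so that the countably many $P$-rational ambiguities of representation neither add nor delete points and do not interfere with the OSC. Once that identification is in place, Moran's theorem supplies everything else. (If $q=3$ and $u\in A_0$ the index set $A_0\setminus\{u\}$ is a singleton and the set degenerates to a point; the statement is to be read for $|A_0\setminus\{u\}|\ge2$, which is automatic once $q\ge4$.)
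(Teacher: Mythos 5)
Your argument is correct, and there is nothing in this paper to compare it against: Theorem~\ref{th: S(p,u)} is quoted from \cite{sets} and no proof is reproduced here. Your route --- identify $\mathbb S_{(P,u)}$ with the attractor of the similarities $f_i(x)=\beta$-shifts with ratios $r_i=p_ip_u^{\,i-1}$, check the open set condition via disjointness of the rank-$i$ cylinders, and invoke Moran's theorem for the dimension equation, with $\sum_i r_i\le 1-p_0<1$ giving null Lebesgue measure and hence nowhere density --- is the standard one for such Moran-type constructions and is surely the argument intended in the cited source. The only points worth pinning down explicitly are the two you already flag: injectivity of the block coding (which follows since points of $\mathbb S_{(P,u)}$ are $P$-irrational, their expansions being neither eventually $0$ nor eventually $q-1$ because every block ends in $\alpha_k\notin\{0,u\}$), and the degenerate case $|A_0\setminus\{u\}|=1$ (e.g.\ $q=3$, $u\ne 0$), where the set is a single point and the statement must be read accordingly.
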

\begin{theorem}{\cite{{sets2}}.}
\label{th: the second main theorem}
In the general case,  the set  $\mathbb S_{(-P,u)} $ is not a self-similar fractal,  the Hausdorff dimension $\alpha_0 (\mathbb S_{(-P,u)})$ of which can be calculated by the formula: 
$$
\alpha_0=\liminf_{k\to\infty}{\alpha_k},
$$
where $(\alpha_k)$ is a sequence of numbers satisfying the equation
$$
\left(\sum_{\substack{c_1 \text{is odd}\\ c_1\in \overline{A}}}{\left(\omega_{2,c_1}\right)^{\alpha_1} }+\sum_{\substack{c_1 \text{is even}\\ c_1\in \overline{A}}}{\left(\omega_{4,c_1}\right)^{\alpha_1} }\right)\times
$$
$$
\times\prod^k _{i=2}{\left(\sum_{\substack{c_i \text{is odd}\\ c_i\in \overline{A}}}{N_{1,c_i}\left(\omega_{1,c_i}\right)^{\alpha_i} }+\sum_{\substack{c_i \text{is odd}\\ c_i\in \overline{A}}}{N_{2,c_i}\left(\omega_{2,c_i}\right)^{\alpha_i} }+\sum_{\substack{c_i \text{is even}\\ c_i\in \overline{A}}}{N_{3,c_i}\left(\omega_{3,c_i}\right)^{\alpha_i} }+\sum_{\substack{c_i \text{is even}\\ c_i\in \overline{A}}}{N_{4,c_i}\left(\omega_{4,c_i}\right)^{\alpha_i} }\right)} =1.   
$$
Here $N_{j,c_i}$ ($j=\overline{1,4}, 1<i\in\mathbb N$) is the number of cylinders $\Delta^{(-P,u)} _{c_1c_2...c_i}$ for which
$$
\frac{d\left(\Delta^{(-P,u)} _{c_1c_2...c_{i-1}c_i}\right)}{d\left(\Delta^{(-P,u)} _{c_1c_2...c_{i-1}}\right)}=\omega_{j,c_i}.
$$
Also, 
$$
\omega_{1,c_i}=\underbrace{p_{s-1-u}p_u\ldots p_{s-1-u}p_u}_{c_i-1}p_{s-1-c_i}\frac{d\left(\overline{\mathbb S_{(P,u)}}\right)}{d\left(\underline{\mathbb S_{(P,u)}}\right)}~~~\text{for an  odd number $c_i$},
$$
$$
\omega_{2,c_i}=\underbrace{p_up_{s-1-u}\ldots p_up_{s-1-u}}_{c_i-1}p_{c_i}\frac{d\left(\underline{\mathbb S_{(P,u)}}\right)}{d\left(\overline{\mathbb S_{(P,u)}}\right)}~~~\text{for an odd number $c_i$},
$$
$$
\omega_{3,c_i}=\underbrace{p_{s-1-u}p_u\ldots p_{s-1-u}p_up_{s-1-u}}_{c_i-1}p_{c_i}~~~\text{for an even number $c_i$},
$$
$$
\omega_{4,c_i}=\underbrace{p_up_{s-1-u}\ldots p_up_{s-1-u}p_{u}}_{c_i-1}p_{s-1-c_i}~~~\text{for an even number $c_i$}.
$$
In addition, $N_{1,c_i}+N_{2,c_i}=l(m+l)^{i-1}$ and $N_{3,c_i}+N_{4,c_i}=m(m+l)^{i-1}$, where $l$ is the number of odd numbers in the set  $\overline{A}=A\setminus\{0,u\}$ and $m$ is the number of even numbers in $\overline{A}$.
\end{theorem}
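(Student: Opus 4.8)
The plan is to realise $\mathbb S_{(-P,u)}$ concretely as a subset of $[0,1]$, describe its net of cylinders and the metric data of that net explicitly, recognise it as a (non-homogeneous) Moran-type construction, and then apply the standard dimension theory for such sets; the failure of self-similarity will drop out of the explicit contraction ratios. The first step is to pass from the nega-$P$-representation to the ordinary $P$-representation by the dictionary recalled just before the statement, $x=\Delta^{-P}_{i_1i_2i_3\ldots}\equiv\Delta^{P}_{i_1[q-1-i_2]i_3[q-1-i_4]\ldots}$: a point of $\mathbb S_{(-P,u)}$ is then the real number whose ordinary $P$-code is obtained from the block word $\underbrace{u\cdots u}_{i_1-1}i_1\underbrace{u\cdots u}_{i_2-1}i_2\cdots$ by replacing each digit in an even position by its $(q-1)$-complement. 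Writing $S_k=i_1+\cdots+i_k$ for the position of the $k$-th marker, the $k$-th block contributes to this $P$-code a product of $p$-values in which the $u$'s alternate between $p_u$ and $p_{s-1-u}$ — which of them comes first being fixed by the parity of $S_{k-1}$ — while the marker contributes $p_{i_k}$ or $p_{s-1-i_k}$ according to the parity of $S_k$. Since an even marker preserves and an odd marker reverses the parity of the running sum, the tail hanging below the $k$-th block has the same ``shape'' as that below the $(k-1)$-st block in the even case and the reflected shape in the odd case; hence $d\big(\Delta^{(-P,u)}_{c_1\ldots c_i}\big)$ equals the product of the $i$ block factors just described, multiplied by the correction $d(\overline{\mathbb S_{(P,u)}})/d(\underline{\mathbb S_{(P,u)}})$ (or its reciprocal) once for every odd marker. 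Working this out shows that $d\big(\Delta^{(-P,u)}_{c_1\ldots c_i}\big)/d\big(\Delta^{(-P,u)}_{c_1\ldots c_{i-1}}\big)$ takes exactly the four values $\omega_{1,c_i},\ldots,\omega_{4,c_i}$ of the statement, the value being selected by the pair $(\mathrm{parity}\ c_i,\ \mathrm{parity}\ S_{i-1})$. This already yields the non-self-similarity: for $P$ not palindromic (i.e.\ $p_j\ne p_{s-1-j}$ for some $j$) the ratio attached to a fixed marker genuinely depends on $\mathrm{parity}(S_{i-1})$ and both alternatives occur cofinally, so the net of cylinders is not a finite union of rescaled copies of itself and no finite system of contracting similarities generates $\mathbb S_{(-P,u)}$ — in contrast to the constant ratios $p_i p_u^{i-1}$ behind Theorem~\ref{th: S(p,u)}.

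Next comes the bookkeeping. The markers range over $\overline A=A\setminus\{0,u\}$, with $l$ odd and $m$ even elements, so there are $(m+l)^{i-1}$ cylinders of level $i-1$, and evaluating the generating function $\prod(m\cdot1+l\cdot(\pm1))$ shows that $\tfrac12\big((m+l)^{i-1}\pm(m-l)^{i-1}\big)$ of them have $S_{i-1}$ even, respectively odd. Distributing these counts according to the parity of the appended marker $c_i$ gives the multiplicities $N_{j,c_i}$, and in particular the identities $N_{1,c_i}+N_{2,c_i}=l(m+l)^{i-1}$ and $N_{3,c_i}+N_{4,c_i}=m(m+l)^{i-1}$. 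A computation that regroups cylinders of equal diameter-ratio should then turn $\sum_{\text{level }k}d(\cdot)^{\alpha}$ into the product displayed in the theorem, so that the number $\alpha_k$ determined by setting that product equal to $1$ is precisely the exponent balancing the $\alpha_k$-dimensional sum over the cylinders of level $k$.

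For the Hausdorff dimension I would use the standard machinery for Moran-type sets. Since only finitely many values $\omega_{j,c_i}$ occur, all lying in a fixed subinterval $(\rho_0,\rho_1)\subset(0,1)$, the construction has bounded geometry. Covering by the level-$k$ cylinders gives, for $s>\liminf_k\alpha_k$, a bound $\sum_{\text{level }k}d(\cdot)^{s}\le \rho_1^{\,k(s-\alpha_k)}\to 0$ along a suitable subsequence, hence $\mathcal H^{s}(\mathbb S_{(-P,u)})=0$ and $\dim_H\mathbb S_{(-P,u)}\le\liminf_k\alpha_k$. For the reverse inequality I would build the natural mass distribution $\mu$ assigning to each level-$i$ cylinder a mass proportional within its level to $d(\cdot)^{\alpha_i}$, and estimate $\mu(B(x,r))$: a ball of radius $r$ meets only boundedly many cylinders of the generation whose diameter is comparable to $r$, and on each of them $\mu(\cdot)\le C r^{s}$ for every $s<\liminf_k\alpha_k$, so the mass distribution principle gives $\dim_H\mathbb S_{(-P,u)}\ge\liminf_k\alpha_k$. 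Together these prove $\dim_H\mathbb S_{(-P,u)}=\liminf_k\alpha_k=\alpha_0$.

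The step I expect to be the real obstacle is the second one: pinning down the cylinder geometry of the nega-$P$ net exactly. In a nega-representation the sub-cylinders do not nest monotonically — raising an even-position digit \emph{decreases} the number — and this is what forces the reflected tail copies, hence the correction factors $d(\overline{\mathbb S_{(P,u)}})/d(\underline{\mathbb S_{(P,u)}})$; getting all four ratios $\omega_{1,c_i},\ldots,\omega_{4,c_i}$ matched correctly to the parity data, and keeping the counts $N_{j,c_i}$ consistent through the variable block lengths, is where the work lies. Once that net is in hand, the covering and mass-distribution estimates of the last step are routine thanks to the bounded geometry, and the non-self-similarity is an immediate corollary of the ratio list not being constant.
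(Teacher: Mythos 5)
A preliminary point: this theorem is quoted from \cite{sets2} and the present paper contains no proof of it whatsoever, so there is no in-paper argument to compare your proposal against; what follows assesses the outline on its own terms. Your skeleton --- translate the nega-$P$ code into the ordinary $P$ code, exhibit $\mathbb S_{(-P,u)}$ as an inhomogeneous Moran construction whose level-$i$ contraction ratios are the four quantities $\omega_{j,c_i}$ selected by the parities of $c_i$ and of the running sum $S_{i-1}$, count multiplicities by parity, and then invoke Moran-set dimension theory --- is the right one, and the parity bookkeeping behind $N_{1,c_i}+N_{2,c_i}=l(m+l)^{i-1}$ and $N_{3,c_i}+N_{4,c_i}=m(m+l)^{i-1}$ is correct.

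There are, however, two genuine gaps. First, the entire middle of the argument is a promissory note: that the diameter ratios are exactly $\omega_{1,c_i},\dots,\omega_{4,c_i}$ (including the correction factors $d(\overline{\mathbb S_{(P,u)}})/d(\underline{\mathbb S_{(P,u)}})$ coming from the reflected tails) and that the level-$k$ sum $\sum d(\cdot)^{\alpha}$ factors into the displayed product are announced with ``working this out shows'' and ``a computation \dots should then turn'', not carried out --- and you yourself identify this as where the work lies. Note also that the equation in the theorem carries a \emph{different} exponent $\alpha_i$ in each factor of the product, so it is not literally ``$\alpha_k$ balances the $\alpha_k$-dimensional sum over level-$k$ cylinders''; your reading must be reconciled with the stated equation rather than substituted for it. Second, the lower bound $\dim_H\ge\liminf_k\alpha_k$ is the delicate half for inhomogeneous Moran sets: the mass distribution has to be constructed compatibly with the varying per-level exponents, one must verify the separation and bounded-geometry hypotheses of the Moran-structure theorems (in a nega-expansion the sub-cylinders are ordered in reverse and may abut, so this is not automatic), and the estimate $\mu(B(x,r))\le Cr^{s}$ requires comparing a ball with cylinders of two consecutive generations using that the ratios are bounded below --- none of which is more than named in your sketch. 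Finally, your non-self-similarity argument only shows that the given cylinder net is not generated by one fixed finite list of ratios; excluding that the set is the attractor of some \emph{other} system of similarities is a stronger claim, though the theorem's ``in the general case'' is itself informal. In short: right skeleton, but both load-bearing steps remain to be done.
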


More explanations and examples are given in \cite{{sets1}}.

The last sets are sets of the Moran type and  play an important role in multifractal analysis/formalism and especially the refined multifractal formalism (for example, see  \cite{2021-1, Wu2005, Wu2005(2), XW2008} and references therein). It is well-known  (see \cite{sets1} and references therein) that the multifractal analysis was proven to be a very useful technique in the analysis of measures, both in theory and applications.  Also, the multifractal and  fractal analysis allows one to perform a certain classification of singular measures.  A general form of multifractal formalism was  introduced by L. Olsen in \cite{Olsen1995}. In a general way, one can describe the following topics of related investigations: the singularity of Hewitt--Stromberg measures on Bedford--McMullen carpets \cite{AB2021-AM}, the mutual singularity of certain measures (see \cite{{Yuan2019-N}, {DS2020-ERA}, {XW2008-Fr}, {Selmi2021-BPASciM}, {DS2021-RM}} and references therein), dimensions of measures \cite{{RS2021-JGA}, {HLW2020-JMAA}, {Selmi2022-ASM}}; the discuss of applications of the singularly of continuous measures \cite{DS2020-ERA}; the Hausdorff and packing dimensions of the level sets in terms of the Legendre transform of some free energy function in analogy with the usual thermodynamic theory (\cite{{Selmi2022-ASM}, {Yuan2019-N}, {2021-1}} and references therein);  multifractal properties  of homogeneous Moran fractals associated with Fibonacci sequence \cite{Wu2005(2)}, multifractal properties \cite{{Wu2005}, {WX2011-ChSF}}; applications of  fractals of Moran's types (for example, see \cite{DSM2021-EPhJ}); the  multifractal formalism based on the Hewitt--Stromberg measures (\cite{2021-1}); the multifractal formalism based on random and non-random self-similar measures, on self-conformal measures, on self-affine and for Moran measures (see \cite{{2021-1}, {DS2021-Pr}} and references therein).

Based on the mentioned arguments on applications relationships between various numeral systems for modelling and studying in the theory of pathological mathematical objects having complicated local structure, let us investigate a map of P-representation into sign-variable expansion, which is called \emph{$\pm P$-representation} and is related with P-representation by the following relationship:
$$
\Delta^{\pm P} _{i_1i_2...i_k...}\equiv \Delta^{P} _{\bar i_1 \bar i_2...\bar i_k...},
$$
where
$$
 \bar {i_k}=\begin{cases}
{i_k}&\text{if  $k \notin N_B$}\\
{q-1-i_k}&\text{if $k \in N_B$}
\end{cases}
$$
and
 $\mathbb N_B$ is a fixed subset of positive integers.

So, let us consider the map
$$
g: x=\Delta^{P} _{i_1i_2...i_k...} \to \Delta^{\pm P} _{i_1i_2...i_k...} =y=g(x),
$$
i.e., 
$$
g(x)=\bar \beta_{i_1}+\sum^{\infty} _{k=2} {\left({\bar\beta}_{i_k}\prod^{k-1} _{j=1} {{\bar p}_{i_j}}\right)},
$$
where
$$
\bar p_{i_k}=\begin{cases}
p_{i_k}&\text{if  $k \notin N_B$}\\
p_{q-1-i_k}&\text{if $k\in N_B$}
\end{cases}
$$
and
$$
\bar\beta_{i_k}=\begin{cases}
\beta_{i_k}&\text{if  $k \notin N_B$}\\
\beta_{q-1-i_k}&\text{if $k \in N_B$}.
\end{cases}
$$

\section{Auxiliary properties}

In this section, let us describe known auxiliary properties which are useful for proofs of the main statements. 

{\itshape A cylinder of rank $m$ with  base $c_1c_2\ldots c_m$} is a set $\Lambda^{P} _{c_1c_2\ldots c_m}$ formed by all numbers
of the segment  $[0,1]$ with P-representations in which the first $m$ digits coincide with $c_1,c_2,\dots ,c_m$, respectively, i.e.,
$$
\Lambda^{P} _{c_1c_2\ldots c_m} :=\left\{x: x=\Delta^{P} _{i_1i_2\ldots i_k\ldots}, i_j=c_j, j=\overline{1,m}\right\}.
$$

Cylinders $\Lambda^{P} _{c_1c_2\ldots c_m}$ have the following properties: 

\begin{enumerate}

\item any cylinder $\Lambda^{P} _{c_1c_2\ldots c_m}$ is a closed interval;
\item 
$$
\inf \Lambda^{P} _{c_1c_2\ldots c_m}= \Delta^{P} _{c_1c_2\ldots c_m000...},
\sup \Lambda^{P} _{c_1c_2\ldots c_m}= \Delta^{P} _{c_1c_2\ldots c_m[q-1][q-1][q-1]...};
$$
\item
$$
| \Lambda^{P} _{c_1c_2\ldots c_m}|=p_{c_1}p_{c_2}\cdots p_{c_m}; 
$$
\item
$$
 \Lambda^{P} _{c_1c_2\ldots c_mc}\subset  \Lambda^{P} _{c_1c_2\ldots c_m};
$$
\item
$$
 \Lambda^{P} _{c_1c_2\ldots c_m}=\bigcup^{q-1} _{c=0} { \Lambda^{P} _{c_1c_2\ldots c_mc}};
$$
\item
$$
\lim_{m \to \infty} { |\Lambda^{P} _{c_1c_2\ldots c_m}|}=0;
$$
\item
$$
\frac{| \Lambda^{P} _{c_1c_2\ldots c_mc_{m+1}}|}{| \Lambda^{P} _{c_1c_2\ldots c_m}|}=p_{c_{m+1}};
$$
\item 
$$
\sup\Lambda^{P} _{c_1c_2...c_mc}=\inf  \Lambda^{P} _{c_1c_2...c_m[c+1]},
$$
where $c \ne q-1$;
\item
$$
\bigcap^{\infty} _{m=1} {\Lambda^{P} _{c_1c_2\ldots c_m}}=x=\Delta^{P} _{c_1c_2\ldots c_m\ldots}.
$$
\end{enumerate}

A number $x \in[0,1]$ is called   {\itshape P-rational} if 
$$
x=\Delta^{P} _{i_1i_2\ldots i_{k-1}i_k 000\ldots}
$$
or
$$
x=\Delta^{P} _{i_1i_2\ldots i_{k-1}[i_k-1][q-1][q-1][q-1]\ldots}.
$$
The  other  numbers in $[0,1]$ are called {\itshape P-irrational}.

The mapping is defined by
$$
\sigma(x)=\sigma \left(\Delta^{P} _{i_1i_2...i_k...}\right)=\beta_{i_2}+\sum^{\infty} _{k=3}{\left(\beta_{i_k}\prod^{k-1} _{j=2}{p_{i_j}}\right)}
$$
is called \emph{the shift operator $\sigma$ of the $P$-expansion of  $x$}. 

The following mapping
$$
\sigma^n(x)=\beta_{i_{n+1}}+\sum^{\infty} _{k=n+2}{\left(\beta_{i_k}\prod^{k-1} _{j=n+1}{p_{i_j}}\right)}
$$
is called \emph{the shift operator $\sigma^n$ of rank $n$ of the $P$-expansion of a number $x$}.

Since  $x=\beta_{i_1}+p_{i_1}\sigma(x)$, 
$$
\sigma(x)=\frac{x-\beta_{i_1}}{p_{i_1}}.
$$

It is easy to see that the following expressions are true. 
$$
\sigma^n(x)=\frac{1}{p^n _{0}}\Delta^{P} _{\underbrace{0...0}_{n}i_{n+1}i_{n+2}...},
$$
\begin{equation}
\label{shift operator}
\sigma^{n-1}(x)=\beta_{i_n}+p_{i_n}\sigma^n(x)=\frac{1}{p^{n-1} _{0}}\Delta^{P} _{\underbrace{0...0}_{n-1}i_{n}i_{n+1}...}.
\end{equation}

Such operator is useful to formulate conditions of representations of rational numbers  (see~\cite{{S2023-Communications-in-Mathematics}, {Serbenyuk20Tatra}, {Symon2021-ActaMathHungarica}} Com Math, Acta, ) and to model   functions with complicated local structure (for example, see \cite{{Symon2015}, {Symon2017}, {S. Serbenyuk systemy rivnyan 2-2}, {Symon2019}}, etc.).

\section{The main object: the monotonicity and differential properties}

Suppose $q>1$ is a fixed positive integer,  as well as $P :=(p_0, p_1, \dots, p_{q-1})$ is a fixed set of real numbers for which the conditions $p_j>0$ for all $j=\overline{0,q-1}$ and $p_0+p_1\dots + p_{q-1}=1$ hold. Suppose $\mathbb N_B$ is a fixed subset of positive integers.

Let us consider the function
$$
g(x)=\bar \beta_{i_1}+\sum^{\infty} _{k=2} {\left({\bar\beta}_{i_k}\prod^{k-1} _{j=1} {{\bar p}_{i_j}}\right)},
$$
where
$$
\bar p_{i_k}=\begin{cases}
p_{i_k}&\text{if  $k \notin N_B$}\\
p_{q-1-i_k}&\text{if $k\in N_B$}
\end{cases}
$$
and
$$
\bar\beta_{i_k}=\begin{cases}
\beta_{i_k}&\text{if  $k \notin N_B$}\\
\beta_{q-1-i_k}&\text{if $k \in N_B$}.
\end{cases}
$$

As an auxiliary notation, we use the following relationship:
$$ 
y=g(x)= \Delta^{\pm P} _{i_1i_2...i_k...} \equiv \Delta^{P} _{\bar i_1 \bar i_2... \bar i_k...},
 $$
where
$$
 \bar {i_k}=\begin{cases}
{i_k}&\text{if  $k \notin N_B$}\\
{q-1-i_k}&\text{if $k \in N_B$}.
\end{cases}
$$

In addition, we have $\Delta^P _{[q-1][q-1][q-1]...}=1$, $\Delta^P _{000...}=0$, and $0\le \sigma^m (x)  \le 1$ for any $x\in [0, 1]$ and positive integers $m$.

\begin{theorem} If the condition $N_B\in\{\emptyset, \mathbb N\}$ holds, then the function $g$ is  continuous on $[0, 1]$.

In  other cases, the  function   $g$ is continuous at $P$-irrational  points, and the $P$-rational  points are points of discontinuity (jumps) of the function. Also, the set of all points of discontinuity of  $g$ can be a finite or  enumerable set. This fact depends on the number of elements of $N_B$ (finite or infinite).
\end{theorem}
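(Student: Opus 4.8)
The plan is to run everything through the cylinder calculus of Section~2, together with the remark that $i_k\mapsto\bar i_k$ is an involution (flipping, or not flipping, twice is the identity), so that $g$ is a bijection of $[0,1]$ onto itself with $g\bigl(\Lambda^{P}_{c_1c_2\ldots c_m}\bigr)=\Lambda^{P}_{\bar c_1\bar c_2\ldots\bar c_m}$ — for continuity only the inclusion $\subseteq$ will be needed. The first, $N_B$-free, observation is a localization estimate: if $x\in\Lambda^{P}_{c_1c_2\ldots c_m}$ then $i_j(x)=c_j$, hence $\bar i_j(x)=\bar c_j$ for $j\le m$, so $g(x)\in\Lambda^{P}_{\bar c_1\bar c_2\ldots\bar c_m}$, and therefore
$$
x,x'\in\Lambda^{P}_{c_1c_2\ldots c_m}\ \Longrightarrow\ \bigl|g(x)-g(x')\bigr|\ \le\ \bigl|\Lambda^{P}_{\bar c_1\bar c_2\ldots\bar c_m}\bigr|\ =\ p_{\bar c_1}p_{\bar c_2}\cdots p_{\bar c_m},
$$
whose right-hand side tends to $0$ as $m\to\infty$ by cylinder properties (3) and (6). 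If $x_0$ is $P$-irrational, its $P$-representation is unique and, by properties (1)--(2), the endpoints of every $\Lambda^{P}_{c_1\ldots c_m}$ are $P$-rational, so $x_0$ is interior to each cylinder containing it; given $\varepsilon>0$, choose $m$ with $p_{\bar c_1}\cdots p_{\bar c_m}<\varepsilon$ and then $\delta>0$ with $(x_0-\delta,x_0+\delta)\subset\Lambda^{P}_{c_1\ldots c_m}$, and the estimate gives $|g(x)-g(x_0)|<\varepsilon$ on this interval. Hence $g$ is continuous at every $P$-irrational point, for any choice of $N_B$.

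Second, I would compute the one-sided limits at a $P$-rational $x_0=\Delta^{P}_{c_1\ldots c_{m-1}c_m000\ldots}=\Delta^{P}_{c_1\ldots c_{m-1}[c_m-1][q-1][q-1]\ldots}$, $c_m\ne0$. Since $x_0=\inf\Lambda^{P}_{c_1\ldots c_m}$, every $x$ slightly above $x_0$ lies in $\Lambda^{P}_{c_1\ldots c_m\underbrace{0\ldots0}_{j}}$ with $j\to\infty$; localizing $g$ there and using property (6), $\lim_{x\to x_0+}g(x)=g^{+}(x_0)$, the value of the defining series of $g$ on the string $c_1\ldots c_m000\ldots$. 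Symmetrically, $x_0=\sup\Lambda^{P}_{c_1\ldots c_{m-1}[c_m-1]}$ gives $\lim_{x\to x_0-}g(x)=g^{-}(x_0)$, the value on $c_1\ldots c_{m-1}[c_m-1][q-1][q-1]\ldots$. So $g$ is continuous at $x_0$ iff $g^{+}(x_0)=g^{-}(x_0)$; the two strings agree in positions $1,\dots,m-1$, whence $g^{+}(x_0)-g^{-}(x_0)=\bigl(\prod_{j=1}^{m-1}\bar p_{c_j}\bigr)(T^{+}-T^{-})$ with $T^{\pm}$ the $P$-values of the two tails from position $m$. Peeling off the $m$-th term via $\beta_{\ell}=\beta_{\ell-1}+p_{\ell-1}$ — distinguishing $m\notin N_B$, where the $m$-th digits are $c_m$ and $c_m-1$, from $m\in N_B$, where they are $q-1-c_m$ and $q-c_m=(q-1-c_m)+1$ — reduces $|T^{+}-T^{-}|$ to $p_{a}(1-S)+p_{b}S'$ for appropriate digit indices $a,b$, where $S,S'$ are the $P$-values of two complementary $\{0,q-1\}$-valued strings whose $k$-th digit ($k>m$) is $q-1$ if $k\notin N_B$ and $0$ if $k\in N_B$. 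Both terms are nonnegative and, since all $p_i>0$, the sum vanishes only when $S,S'$ degenerate to the constant strings $q-1,q-1,\dots$ and $0,0,\dots$, that is, precisely when every position $k>m$ shares the $N_B$-phase of position $m$; in that borderline case the vanishing is exactly the endpoint identity $\sup\Lambda^{P}_{\,\cdot\,[c-1]}=\inf\Lambda^{P}_{\,\cdot\,[c]}$ of property (8), and otherwise $g$ has a genuine jump at $x_0$.

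It remains to assemble the cases. If $N_B=\emptyset$ or $N_B=\mathbb N$, then for every $m$ all positions $>m$ share the phase of $m$, so $g^{+}(x_0)=g^{-}(x_0)$ at every $P$-rational and, with the first paragraph, $g$ is continuous on $[0,1]$ (for $N_B=\mathbb N$ it is the decreasing bijection $\Delta^{P}_{i_1i_2\ldots}\mapsto\Delta^{P}_{[q-1-i_1][q-1-i_2]\ldots}$). If $\emptyset\ne N_B\ne\mathbb N$, pick $m_0\in N_B$ and $m_1\notin N_B$; the smaller of $m_0,m_1$ is a position whose $N_B$-phase differs from that of some later position, so by the previous paragraph $g^{+}(x_0)\ne g^{-}(x_0)$ for every $P$-rational $x_0=\Delta^{P}_{c_1\ldots c_m000\ldots}$ ($c_m\ne0$) with $m$ that position — hence $g$ has genuine jumps; and since $g$ is already continuous at all $P$-irrationals, its set of discontinuities is a nonempty subset of the countable set of $P$-rationals. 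Finally, $x_0=\Delta^{P}_{c_1\ldots c_m000\ldots}$ ($c_m\ne0$) is a discontinuity exactly when the phase of position $m$ differs from that of some later position; the set of such ``jump levels'' $m$ is finite when $N_B$ is finite (none exceeds $\max N_B$) and can be countably infinite otherwise, which gives the finite/enumerable alternative. The one delicate step is the second paragraph: making the one-sided limits rigorous and then transporting the position-dependent flips through the tail sum without a sign slip, since the vanishing criterion genuinely branches on whether $m\in N_B$ and on whether $\{m+1,m+2,\dots\}$ is homogeneous with respect to $N_B$.
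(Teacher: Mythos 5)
Your proposal is correct and follows essentially the same route as the paper: the product/cylinder bound gives continuity at $P$-irrational points, and at $P$-rational points the comparison of the two representations reduces the jump to a sign-definite expression that vanishes only when the tail beyond position $m$ is $N_B$-homogeneous with the phase of $m$, which forces the case split $N_B\in\{\emptyset,\mathbb N\}$ versus the rest. You are in fact somewhat more careful than the paper in two places — justifying that the one-sided limits coincide with the series values on the two representations, and explicitly translating the paper's conditions $g(\sigma^m(0))=0$, $g(\sigma^m(1))=1$ (respectively reversed for $m\in N_B$) into the phase condition on $N_B$ — but the underlying argument is the same.
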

\begin{proof} Let use the standard technique considering two cases: the case of a $P$-rational point, as well as  of a $P$-irrational point.

Suppose  $x_0, x \in \Lambda^{P} _{c_1c_2...c_m}$  are arbitrary $P$-irrational points such that $x\ne x_0$,  $x=\Delta^{P} _{i_1i_2...i_k ..}$  and $x_0=\Delta^{P} _{j_1j_2...j_k ...}$; then there exists a positive integer $m$  such that  the condition $i_{r}=j_{r}$ holds  for all $r=\overline{1,m}$ and $i_{m+1}\ne j_{m+1}$.  

Since $g$ is a bounded function, $0\le f(x) \le 1$, we obtain 
\begin{equation*}
\begin{split}
g(x)-g(x_0)&=\bar \beta_{i_1}+\sum^{m} _{k=2}{\left(\bar \beta_{i_k}\prod^{k-1} _{t=1}{\bar p_{i_t}}\right)}+\prod^{m} _{u=1}{\bar p_{i_u}}\left(\bar \beta_{i_{m+1}}+\sum^{\infty} _{s=m+2}{\left(\bar \beta_{i_s}\prod^{s-1} _{l=m+1}{\bar p_{i_l}}\right)}\right)\\
&-\bar \beta_{i_1}-\sum^{m} _{k=2}{\left(\bar \beta_{i_k}\prod^{k-1} _{t=1}{\bar p_{i_t}}\right)}-\prod^{m} _{u=1}{\bar p_{i_u}}\left(\bar \beta_{j_{m+1}}+\sum^{\infty} _{s=m+2}{\left(\bar \beta_{j_s}\prod^{s-1} _{l=m+1}{\bar p_{j_l}}\right)}\right)\\
&=\prod^{m} _{u=1}{\bar p_{i_u}}\left(g(\sigma^m(x))- g(\sigma^m(x_0))\right)\le (1-0)\prod^{m} _{u=1}{\bar p_{i_u}}=\prod^{m} _{u=1}{\bar p_{i_u}}.
\end{split}
\end{equation*}
Hence
$$
\lim_{m\to \infty}{|g(x)-g(x_0)|}=\lim_{m\to \infty}{\prod^{m} _{u=1}{\bar p_{ i_u}}}\le\lim_{m\to \infty}{\left(\max\{p_0, p_1, \dots, p_{q-1}\}\right)^{m}}=0.
$$

So, $\lim_{x\to x_0}{g(x)}=g(x_0)$, i.e., our function  is continuous at any $P$-irrational point. 

Let $x_0$ be a $P$-rational number, i.e.,
$$
x_0=x_1=\Delta^{P} _{i_1i_2...i_{m-1}i_m000...}=\Delta^{P} _{i_1i_2...i_{m-1}[i_m-1][q-1][q-1][q-1]...}=x_2,~~~i_m \ne 0.
$$
Then 
\begin{equation*}
\begin{split}
\Delta  g= g(x_1)-g(x_2)&=\prod^{m-1} _{j=1}{\bar p_{i_j}}\left(\bar \beta_{i_m}-\bar \beta_{i_m-1}+\bar p_{i_m}g(\sigma^m(x_1))-\bar p_{i_m-1}g(\sigma^m(x_2))\right)\\
&=\prod^{m-1} _{j=1}{\bar p_{i_j}}\left(\bar \beta_{i_m}-\bar \beta_{i_m-1}+\bar p_{i_m}g(\sigma^m(0))-\bar p_{i_m-1}g(\sigma^m(1))\right).
\end{split}
\end{equation*}
Whence,
\begin{equation*}
\begin{split}
\frac{\Delta g}{\prod^{m-1} _{j=1}{\bar p_{i_j}}}&=\begin{cases}
p_{i_m-1}+p_{i_m}g(\sigma^m(0))-p_{i_m-1}g(\sigma^m(1))&\text{if $m\notin N_B$}\\
-p_{q-1-i_m}+p_{q-1-i_m}g(\sigma^m(0))-p_{q-i_m}g(\sigma^m(1)) &\text{if $m\in N_B$,}
\end{cases}\\
&=\begin{cases}
(1-g(\sigma^m(1)))p_{i_m-1}+p_{i_m}g(\sigma^m(0)) &\text{if $m\notin N_B$}\\
(g(\sigma^m(0))-1)p_{q-1-i_m}-p_{q-i_m}g(\sigma^m(1)) &\text{if $m\in N_B$.}
\end{cases}
\end{split}
\end{equation*}

The equality to zero holds whenever the following systems of conditions is true:
$$
\begin{cases}
1+\frac{p_{i_m}}{p_{i_m-1}}g(\sigma^m(0))= g(\sigma^m(1)) &\text{if $m\notin N_B$}\\
g(\sigma^m(0))=1+\frac{p_{q-i_m}}{p_{q-1-i_m}}g(\sigma^m(1)) &\text{if $m\in N_B$.}
\end{cases}
$$
Hence  $g$ is continuous at a $P$-rational point whenever the system of conditions
$$
\begin{cases}
g(\sigma^m(0))=0,  g(\sigma^m(1))=1 &\text{if $m\notin N_B$}\\
g(\sigma^m(0))=1, g(\sigma^m(1))=0 &\text{if $m\in N_B$}
\end{cases}
$$
holds.

Finally, 
$$
\lim_{x \to x_0 -0} {g(x)}=\Delta^{P} _{\bar i_1 \bar i_2 ... \bar i_{m-1} [\overline{i_m-1}][\overline{q-1}] [\overline{q-1}] [\overline{q-1}] ...},
$$
$$
\lim_{x \to x_0 +0} {g(x)}=\Delta^{P} _{\bar i_1 \bar i_2 ... \bar i_{m-1} \bar i_m \bar 0 \bar 0 \bar 0 ...}.
$$

Since $0\le \sigma^m (x)  \le 1$, $p_r>0$ holds for all $r=\overline{0, q-1}$, we obtain that the set of all points of discontinuity of the function $g$ is empty whenever $N_B\in\{\emptyset, \mathbb N\}$. In other cases, the set of all points of discontinuity of  $g$ can be a finite or  enumerable set. This fact depends on the number of elements of $N_B$ (finite or infinite).
\end{proof}

\begin{remark}
To reach that the function $g$ be upper semicontinuous (a right-continuous function) on the set of $P$-rational numbers from $[0, 1]$, we will not consider the $P$-representations, which have the period $(q-1)$ (without the number~$1$). That is, our function will be well-defined. 
\end{remark}

\begin{theorem} If the condition $N_B\in\{\emptyset, \mathbb N\}$ holds, then the function $g$ is  a monotonic function on the closed interval $[0, 1]$.

In  other cases, the  function   $g$ is not a monotonic function on the domain, but there exist intervals of the  monotonicity whenever the set  $N_B$ is finite, as well as there are not   intervals of the  monotonicity whenever the set  $N_B$ is infinite.
\end{theorem}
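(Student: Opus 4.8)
The plan is to reduce everything to the behaviour of $g$ on a single cylinder. First I would record the basic self-affine identity: for any base $c_1\ldots c_m$ and any $x=\Delta^{P}_{c_1\ldots c_m i_{m+1}i_{m+2}\ldots}\in\Lambda^{P}_{c_1c_2\ldots c_m}$,
\[
g(x)=\bar\beta_{c_1}+\sum_{k=2}^{m}\bar\beta_{c_k}\prod_{t=1}^{k-1}\bar p_{c_t}+\Big(\prod_{u=1}^{m}\bar p_{c_u}\Big)\,g_m(\sigma^m(x)),
\]
where $g_m$ is the map of the same type as $g$ but built from the shifted index set $N_B-m:=\{j\in\mathbb N:\ j+m\in N_B\}$, so that $g_0=g$. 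Since $\prod_{u=1}^{m}\bar p_{c_u}>0$ and $\sigma^m$ is an order-preserving bijection of $\Lambda^{P}_{c_1\ldots c_m}$ onto $[0,1]$ (cylinder properties (2), (5), (9) and the normalization of the Remark), $g$ is increasing, decreasing, or neither on $\Lambda^{P}_{c_1\ldots c_m}$ exactly when $g_m$ is increasing, decreasing, or neither on $[0,1]$. This converts the theorem into a statement about when $g$ itself, for the various shifted sets, is monotone on $[0,1]$.

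Second I would settle the two monotone cases. If $N_B=\emptyset$ then every bar is trivial and $g(x)=x$ by the definition of the $P$-representation, so $g$ is increasing. If $N_B=\mathbb N$ then $g$ sends $\Delta^{P}_{i_1i_2\ldots}$ to $\Delta^{P}_{[q-1-i_1][q-1-i_2]\ldots}$; comparing two points through the first index in which they differ and using that $c<c'$ forces $\Lambda^{P}_{q-1-c}$ to lie entirely to the right of $\Lambda^{P}_{q-1-c'}$ (property (8), iterated), one gets $g(x)\ge g(x')$ whenever $x\le x'$, so $g$ is decreasing. (Here the convention of the Remark, discarding the period $(q-1)$, is what makes $g$ well defined and the endpoint comparisons clean at $P$-rational points.)

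Third, the genuinely non-monotone situation: suppose $\emptyset\neq N_B\neq\mathbb N$ and choose $a\notin N_B$ and $b\in N_B$. Applying the identity of the first step at ranks $a-1$ and $b-1$, and noting that at the $a$-th digit increasing the digit increases $g$ (since $a\notin N_B$ keeps $\bar 0=0<1=\bar 1$, so the image subcylinders stay ordered left-to-right by property (8)), whereas at the $b$-th digit increasing the digit decreases $g$ (since $b\in N_B$ turns $0<1$ into $\bar 0=q-1>q-2=\bar 1$, reversing the order of the image subcylinders, $q-1\neq q-2$), I would exhibit two pairs of interior $P$-irrational points $u<v$ and $u'<v'$ with $g(u)<g(v)$ and $g(u')>g(v')$; hence $g$ is not monotone on $[0,1]$. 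Combined with the first step this gives the rest: if $N_B$ is finite put $M=\max N_B$; then $N_B-M=\emptyset$, so $g_M=\mathrm{id}$ and $g$ is affine increasing on every $\Lambda^{P}_{c_1\ldots c_M}$ — an interval of monotonicity. If $N_B$ is infinite, take any nondegenerate interval $I$; since $|\Lambda^{P}_{c_1\ldots c_m}|\to0$ uniformly (property (6)), $I$ contains some $\Lambda^{P}_{c_1\ldots c_m}$, and because infinitely many positions belong to $N_B$ and infinitely many lie outside it, the shifted set $N_B-m$ is again a nonempty proper subset of $\mathbb N$; applying the non-monotone argument to $g_m$ and transporting the four points back through the increasing maps $\sigma^m$ and the positive affine factor shows $g$ is non-monotone on $\Lambda^{P}_{c_1\ldots c_m}\subseteq I$.

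The step I expect to be the real obstacle is this last one: the "no interval of monotonicity" conclusion uses, beyond $N_B$ being infinite, that its complement is infinite as well (so that $N_B-m$ is never empty and never all of $\mathbb N$); if $N_B$ were co-finite, the first-step reduction would instead produce decreasing intervals of monotonicity on all sufficiently deep cylinders, so the co-infiniteness of $N_B$ must be invoked explicitly in the infinite case. The only other point needing care is the justification of the strict inequalities $g(u)<g(v)$ and $g(u')>g(v')$ at the level of individual points rather than whole cylinders; choosing $u,v,u',v'$ to be interior $P$-irrational points, whose $g$-images are again $P$-irrational and hence lie strictly inside the relevant image cylinders, together with the right-continuity normalization of the Remark, keeps that part honest and avoids any clash at the shared endpoints of cylinders.
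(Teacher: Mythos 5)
Your proposal is correct, and its core mechanism --- comparing two points through the first index at which their $P$-representations differ and letting membership of that index in $N_B$ decide whether the images preserve or reverse the order --- is exactly the mechanism of the paper's proof. But you go considerably further than the paper does, in two ways that are worth recording. First, the paper's argument stops essentially at the observation that the first differing position $m+1$ yields $g(x_1)<g(x_2)$ when it lies outside $N_B$ and $g(x_1)>g(x_2)$ when it lies inside; it never actually addresses the second half of the statement (existence or non-existence of intervals of monotonicity). Your reduction to the shifted index sets $N_B-m$ on cylinders, combined with cylinder property (6) to place a deep cylinder inside any given interval, is precisely the missing argument, and your treatment of the finite case via $M=\max N_B$ (so that $g$ is affine increasing on every rank-$M$ cylinder) is the correct way to produce the intervals of monotonicity that the paper merely asserts.

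Second, the obstacle you flag at the end is not a technicality but an actual error in the theorem as stated: if $N_B$ is infinite but co-finite (say $N_B=\mathbb N\setminus\{1\}$), then $N_B-m=\mathbb N$ for all $m\ge\max(\mathbb N\setminus N_B)$, so $g$ is \emph{decreasing} on every sufficiently deep cylinder and intervals of monotonicity do exist, contradicting the claim that there are none whenever $N_B$ is infinite. The correct dichotomy is: intervals of monotonicity exist if and only if $N_B$ is finite or co-finite, and they fail to exist exactly when both $N_B$ and its complement are infinite. The paper's proof passes over this silently because it never carries out the interval analysis. Your version, with the co-infiniteness hypothesis made explicit in the infinite case, is the statement that is actually true, and your proof of it is sound.
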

\begin{proof}  Let us prove the statement by the definition of the monotonicity. Suppose $x_1<x_2$ with  $x_1=\Delta^{P} _{i_{1}i_{2}...i_{k}...}$ and $x_2=\Delta^{P} _{j_{1}j_{2}...j_{k}...}$; then there exists a number$m$ such that $i_r=j_r$ holds for all $r=\overline{1,m}$ and $i_{m+1}<j_{m+1}$. Whence $x_1, x_2 \in \Lambda^{P} _{c_1c_2...c_m}$. 

Let us consider the images $g(x_1)$ and $g(x_2)$. Really, we have $\bar i_r=\bar j_r=\bar c_r$ holds for all $r=\overline{1,m}$,  but
$$
\bar i_{m+1}=i_{m+1}<j_{m+1}=\bar j_{m+1}~~~\text{whenever}~~~m\notin N_B
$$
and
$$
\bar i_{m+1}=q-1-i_{m+1}>q-1-j_{m+1}=\bar j_{m+1}~~~\text{whenever}~~~m\in N_B.
$$
are true.

When $m\to\infty$, we will obtain infinitely many images $g(x_1)<g(x_2)$ for $m\notin N_B$ and 
$g(x_1)>g(x_2)$ for $m\in N_B$. 
\end{proof}

\begin{theorem}
If $N_B\ne \emptyset$, then the function $g$  is a singular function almost everywhere on~$[0, 1]$.
\end{theorem}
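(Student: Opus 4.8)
The plan is to show that, off a Lebesgue--null set, the difference quotients of $g$ degenerate, so that $g'=0$ almost everywhere wherever $g'$ exists, while $g$ is visibly non-constant; this is what singularity means here. The $P$-rational points together with their $g$-preimages form a countable, hence null, set, so I work only with $P$-irrational $x=\Delta^{P}_{i_1i_2\ldots}$. The decisive structural fact, immediate from the definition of $g$ and the cylinder structure (the prefix $i_1\dots i_m$ determines $\Lambda^{P}_{i_1\ldots i_m}$, of length $p_{i_1}\cdots p_{i_m}$) together with the observation that $i\mapsto\bar i$ is a bijection of $\{0,1,\dots,q-1\}$ in each coordinate, is that $g$ carries $\Lambda^{P}_{i_1\ldots i_m}$ onto $\Lambda^{P}_{\bar i_1\ldots\bar i_m}$; consequently
$$
\frac{\bigl|g\bigl(\Lambda^{P}_{i_1\ldots i_m}\bigr)\bigr|}{\bigl|\Lambda^{P}_{i_1\ldots i_m}\bigr|}
=\prod_{k=1}^{m}\frac{\bar p_{i_k}}{p_{i_k}}
=\prod_{\substack{1\le k\le m\\ k\in N_B}}\frac{p_{q-1-i_k}}{p_{i_k}}=:r_m(x).
$$

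The heart of the argument is that $r_m(x)\to0$ for Lebesgue--almost every $x$. By cylinder property $(3)$, under Lebesgue measure the digits $i_1,i_2,\ldots$ are independent with $\mathbb{P}(i_k=c)=p_c$, so $X_k:=\log p_{q-1-i_k}-\log p_{i_k}$ are i.i.d.\ bounded random variables with
$$
\mathbb{E}X_1=\sum_{c=0}^{q-1}p_c\log p_{q-1-c}-\sum_{c=0}^{q-1}p_c\log p_c
=-D\bigl((p_{q-1-c})_{c}\,\big\|\,(p_c)_{c}\bigr)\le0,
$$
the middle equality by the substitution $c\mapsto q-1-c$ and the last by the Gibbs inequality, the bound being strict unless $(p_0,\dots,p_{q-1})$ is a palindrome (the one $P$ that must be excluded: for it, already $N_B=\mathbb N$ gives $g(x)=1-x$). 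Taking $N_B$ infinite — the substantive case, since for finite $N_B$ the tail of $g$ is the identity shift, $g$ is piecewise linear with finitely many pieces, and $g'$ is a nonzero constant on each — and setting $B_m:=|N_B\cap\{1,\dots,m\}|\to\infty$, the strong law of large numbers applied along the index set $N_B$ gives $B_m^{-1}\log r_m(x)=B_m^{-1}\sum_{k\in N_B,\,k\le m}X_k\to\mathbb{E}X_1<0$ a.s., hence $\log r_m(x)\to-\infty$ and $r_m(x)\to0$ for a.e.\ $x$.

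It then remains to convert $r_m(x)\to0$ into $g'(x)=0$ a.e. For $N_B=\mathbb{N}$ the earlier theorems give that $g$ is continuous and monotone, hence differentiable a.e.\ by Lebesgue's theorem; and if $g'(x)$ exists, then $\frac{g(b)-g(a)}{b-a}\to g'(x)$ for every pair of intervals $[a,b]\ni x$ shrinking to $x$ (a one-sided first-order expansion on each side of $x$, valid for monotone $g$ with no restriction on where $x$ sits inside $[a,b]$). Applying this to the nested cylinders $\Lambda^{P}_{i_1\ldots i_m}$ and using $|g(\Lambda^{P}_{i_1\ldots i_m})|=\prod_{k\le m}\bar p_{i_k}$ yields $|g'(x)|=\lim_m r_m(x)=0$ for a.e.\ $x$, so $g'=0$ a.e.\ and, $g$ being non-constant, $g$ is singular; one may additionally record the parallel assertion $g_{*}\lambda\perp\lambda$, which follows from the same computation of $r_m$ carried out with respect to the pushforward $g_{*}\lambda$ (now the expectation of $X_k$ along $N_B$ is $+D>0$).

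The step I expect to be the real obstacle is the remaining regime $\emptyset\ne N_B\subsetneq\mathbb{N}$. There $g$ has a jump at every $P$-rational, has infinite variation on every subinterval, and these jumps accumulate on arbitrarily small scales with sizes comparable to $r_m(x)\,|\Lambda^{P}_{i_1\ldots i_m}|$ while $x$ can lie arbitrarily close to one of them, so $g$ in fact fails to be differentiable at a.e.\ point; consequently ``$g$ is singular almost everywhere'' has to be understood here as ``the regular part of the difference quotient of $g$ vanishes a.e.'' rather than as classical singularity. Making that dichotomy precise, while keeping both hypotheses that are genuinely needed ($N_B$ infinite and $P$ not a palindrome) in force, is the delicate part of a fully rigorous proof; everything else reduces to the clean estimate $r_m\to0$ a.e.\ established above.
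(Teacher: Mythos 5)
Your core computation is the same as the paper's: both arguments reduce everything to the cylinder ratio $\mu_g\bigl(\Lambda^{P}_{c_1\ldots c_m}\bigr)/\bigl|\Lambda^{P}_{c_1\ldots c_m}\bigr|=\prod_{k\le m}\bar p_{c_k}/p_{c_k}=\prod_{k\le m,\,k\in N_B}p_{q-1-c_k}/p_{c_k}$ and then claim this tends to $0$. The difference is that you actually prove the key limit. The paper writes only that ``using arguments from [Symon2023, Symon2024] we get $\lim\prod_{t\in N_B}\bar p_{c_t}/p_{c_t}=0$'', whereas you note that cylinder property (3) makes the digits i.i.d.\ with law $(p_c)$ under Lebesgue measure, apply the strong law of large numbers along $N_B$, and identify the almost-sure limit of $B_m^{-1}\log r_m$ as $-D(p\,\|\,\tilde p)<0$ via the Gibbs inequality. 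That is exactly the argument the citation is standing in for, so on this point your write-up is strictly more complete than the paper's.

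The caveats you raise are not defects of your proof but of the theorem as stated, and you are right about all three. (i) For finite nonempty $N_B$ the function is piecewise affine with $q^{\max N_B}$ pieces of nonzero slope, hence absolutely continuous with $g'\ne 0$ a.e.; the theorem fails there, and correspondingly the paper's claimed limit $\prod_{t\in N_B}\bar p_{c_t}/p_{c_t}\to 0$ is false, being a finite product of positive numbers. (ii) If $(p_0,\dots,p_{q-1})$ is a palindrome the ratio is identically $1$ and $\mathbb{E}X_1=0$, so the mechanism collapses (for $N_B=\mathbb N$ one gets $g(x)=1-x$); this hypothesis is absent from the statement and from the paper's proof. (iii) For infinite $N_B\notin\{\emptyset,\mathbb N\}$ the paper's own continuity and monotonicity theorems make $g$ discontinuous at every $P$-rational and without intervals of monotonicity, so the identification $g'(x_0)=\lim_m\mu_g(\Lambda_m)/|\Lambda_m|$ — which both you and the paper rely on — is no longer underwritten by Lebesgue's differentiation theorem for monotone or BV functions; the paper asserts it without comment, while you correctly flag it as the genuinely unresolved step. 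In the regime where your argument is complete ($N_B=\mathbb N$ and $P$ not a palindrome) it is correct, and your passage from the cylinder ratios to $g'(x)$ via nested intervals around a point of differentiability is the standard fact the paper omits.
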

\begin{proof}
Let us consider a closed interval $\Lambda^{P} _{c_1c_2...c_m}$, where
$$
\Lambda^{P} _{c_1c_2...c_m}=\left[\inf\Lambda^{P} _{c_1c_2...c_m}; \sup\Lambda^{P} _{c_1c_2...c_m}\right]= \left[\Delta^{P} _{c_1c_2...c_m000...}; \Delta^{P} _{c_1c_2...c_m[q-1][q-1][q-1]...}\right], 
$$
and
$$
\Delta^{P} _{c_1c_2...c_m[q-1][q-1][q-1]...}=\begin{cases}
 \Delta^{P} _{c_1c_2...c_{m-1}[c_m+1]000...} &\text{if $c_m\ne q-1$}\\
 1 &\text{in other cases.}
\end{cases}
$$
Then
$$
\sup\Lambda^{P} _{c_1c_2...c_m}- \inf\Lambda^{P} _{c_1c_2...c_m}=\prod^{m} _{u=1}{ p_{ i_u}},
$$
as well as
\begin{equation*}
\begin{split}
\mu_g\left(\Lambda^{P} _{c_1c_2...c_m}\right)&=g\left(\Delta^{P} _{c_1c_2...c_m[q-1][q-1]...}\right)-g\left(\Delta^{P} _{c_1c_2...c_m00...}\right)\\
&=\prod^{m} _{u=1}{\bar p_{ i_u}}(g(\sigma^m(\Delta^{P} _{c_1c_2...c_m[q-1][q-1]...}))-g(\sigma^m(\Delta^{P} _{c_1c_2...c_m00...})))\\
&=\prod^{m} _{u=1}{\bar p_{c_u}}(g(\sigma^m(1))-g(\sigma^m(0)))
\end{split}
\end{equation*}
and
$$
g\left( \Delta^{P} _{c_1c_2...c_{m-1}[c_m+1]000...}\right)-g\left(\Delta^{P} _{c_1c_2...c_m000...}\right)=\prod^{m-1} _{u=1}{\bar p_{ c_u}} (\bar \beta_{c_m+1}-\bar \beta_{c_m}+g(\sigma^m(0)(\bar p_{c_m+1}-\bar p_{c_m})))
$$
or
$$
g\left( \Delta^{P} _{c_1c_2...c_{m-1}[c_m+1]000...}\right)-g\left(\Delta^{P} _{c_1c_2...c_m000...}\right)=\prod^{m-1} _{u=1}{\bar p_{ c_u}} (\bar \beta_{q-1}-\bar \beta_{c_m}+(\bar p_{q-1}g(\sigma^m(1)-\bar p_{c_m}g(\sigma^m(0))))
$$

So,
$$
g^{'}(x_0)=\lim_{m\to\infty}{\frac{\mu_g\left(\Lambda^{P} _{c_1c_2...c_m}\right)}{\left|\Lambda^{P_q} _{c_1c_2...c_m}\right|}}=\lim_{m\to\infty} \prod^{m-1} _{t=1}{y_m\frac{\bar p_{c_t}}{p_{c_t}}}=\lim_{m\to\infty} \prod^{m-1} _{t=1}{y_m\frac{\bar p_{c_t}}{p_{c_t}}},
$$
where $(y_m)$ is a certain bounded sequence. Using arguments from~\cite{{Symon2023}, {Symon2024}}, we get 
$$
\lim_{m\to\infty} \prod_{{\substack{t=\overline{1, m-1},\\ t\in N_B}}}{\frac{\bar p_{c_t}}{p_{c_t}}}=0
$$
and
$$
g^{'}(x_0)=0.
$$
\end{proof}

\section{The main object: about the graph}

Let us consider auxiliary property, which is useful for the calculation of a value of the Lebesgue integral. 
\begin{theorem} 
Let $P_q=(p_0, p_1, \dots , p_{q-1})$ be a fixed tuple of real numbers such that $p_t\in (0,1)$, where $t=\overline{0,q-1}$, $\sum_t {p_t}=1$, and $0=\beta_0<\beta_t=\sum^{t-1} _{l=0}{p_l}<1$ for all $t\ne 0$. Then the following system of functional equations
\begin{equation}
\label{eq: system}
f\left(\sigma^{n-1}(x)\right)=\bar \beta_{i_{n}}+\bar p_{i_{n}}f\left(\sigma^n(x)\right),
\end{equation}
where $x=\Delta^{P} _{i_1i_2...i_k...}$, $n=1,2, \dots$, $\sigma$ is the shift operator, and $\sigma_0(x)=x$, has the unique solution
$$
f(x)=\bar \beta_{{i_1}}+\sum^{\infty} _{k=2}{\left(\bar \beta_{i_{k}}\prod^{k-1} _{r=1}{\bar p_{i_{r}}}\right)}
$$
in the class of determined and bounded on $[0, 1]$ functions. 
\end{theorem}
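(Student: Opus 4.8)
The plan is to follow the classical two-part scheme for systems of functional equations of this type: first I would verify that the proposed series is a genuine solution lying in the prescribed class (determined and bounded on $[0,1]$), and then show that no other bounded solution can exist.

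\textbf{Existence.} First I would check absolute convergence of $f(x)=\bar\beta_{i_1}+\sum_{k=2}^{\infty}\bigl(\bar\beta_{i_k}\prod_{r=1}^{k-1}\bar p_{i_r}\bigr)$ for every $x\in[0,1]$. Each $\bar p_{i_r}$ is one of $p_0,\dots,p_{q-1}$ and each $\bar\beta_{i_k}<1$, so termwise $\bar\beta_{i_k}\prod_{r=1}^{k-1}\bar p_{i_r}\le(\max_j p_j)^{k-1}$ with $\max_j p_j<1$; hence the series is dominated by a convergent geometric series, $f$ is determined, and $0\le f(x)\le1$, i.e.\ $f$ is bounded. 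Then I would substitute, using \eqref{shift operator} and the definition of $\sigma^n$, the expansion $\sigma^{n}(x)=\Delta^{P}_{i_{n+1}i_{n+2}\dots}$ into the right-hand side of \eqref{eq: system}: the factor $\bar p_{i_n}$ multiplied by the series for $f(\sigma^n(x))$ reproduces, after the index shift $k\mapsto k-1$, exactly the tail of the series for $f(\sigma^{n-1}(x))$ beginning with the $\bar\beta_{i_{n+1}}$-term, while $\bar\beta_{i_n}$ contributes the leading term; together they give $f(\sigma^{n-1}(x))$, so the system holds for every $n$.

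\textbf{Uniqueness.} Let $f$ be any solution of the system that is bounded on $[0,1]$, say $|f(t)|\le M$ for all $t$. Iterating \eqref{eq: system} from $n=1$ to $n=N$ yields
$$
f(x)=\bar\beta_{i_1}+\sum_{k=2}^{N}\Bigl(\bar\beta_{i_k}\prod_{r=1}^{k-1}\bar p_{i_r}\Bigr)+\Bigl(\prod_{r=1}^{N}\bar p_{i_r}\Bigr)f\bigl(\sigma^N(x)\bigr).
$$
Since $\prod_{r=1}^{N}\bar p_{i_r}\le(\max_j p_j)^{N}\to0$ as $N\to\infty$ and $|f(\sigma^N(x))|\le M$, the remainder term vanishes in the limit, so $f(x)=\bar\beta_{i_1}+\sum_{k=2}^{\infty}\bigl(\bar\beta_{i_k}\prod_{r=1}^{k-1}\bar p_{i_r}\bigr)$. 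Thus the bounded solution is forced to coincide with the series, which proves uniqueness.

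\textbf{Main obstacle.} There is no deep difficulty here; the one point requiring care is that $f$ be \emph{determined}, i.e.\ well-defined at $P$-rational points, where a number admits two $P$-representations. I would resolve this either by checking directly that both representations feed the same value into the iteration (a short computation with $\sigma^m$ of the endpoints $0$ and $1$), or by invoking the normalization of the Remark above and discarding representations with period $(q-1)$. Everything else is controlled by the geometric decay of $\prod_{r=1}^{N}\bar p_{i_r}$, which is immediate from $p_t\in(0,1)$.
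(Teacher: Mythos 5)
Your proposal is correct and follows essentially the same route as the paper: iterating the system \eqref{eq: system} and using $\prod_{r=1}^{N}\bar p_{i_r}\le(\max_j p_j)^N\to 0$ together with boundedness of $f$ to kill the remainder term, which forces any bounded solution to coincide with the series. The only difference is that you spell out the existence half (that the series itself converges, is bounded, and satisfies the system) and the well-definedness at $P$-rational points, both of which the paper's proof leaves implicit; this is a welcome addition rather than a departure.
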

\begin{proof} Our statement is true, because the following properties and relationships hold: the function $g$ is a determined and bounded  function on $[0,1]$, as well as according to system~\eqref{eq: system},  we have
$$
g(x)=\bar \beta_{{i_1}}+\bar p_{{i_1}}g(\sigma(x))=\bar \beta_{{i_1}}+\bar p_{{i_1}}(\bar \beta_{{i_2}}+\bar p_{{i_2}}g(\sigma^2(x)))=\dots
$$
$$
\dots =\bar \beta_{{i_1}}+\bar \beta_{{i_2}}\bar p_{{i_1}}+\bar \beta_{{i_3}}\bar p_{{i_1}}\bar p_{{i_2}}+\dots +\bar \beta_{i_n}\prod^{n-1} _{l=1}{\bar p_{{i_l}}}+\left(\prod^{n} _{r=1}{\bar p_{{i_r}}}\right)g(\sigma^n(x)),
$$
$$
\prod^{n} _{r=1}{\bar p_{i_r}}\le \left( \max\{p_0, p_1, \dots , p_{q-1}\}\right)^n\to 0, ~~~ n\to \infty,
$$
and
$$
\lim_{n\to\infty}{g(\sigma^n(x))\prod^{n} _{r=1}{\bar p_{{i_r}}}}=0.
$$
\end{proof}

\begin{theorem}
Suppose
$$
\psi_{i_k}: \left\{
\begin{array}{rcl}
x^{'}&=&p_{i_k}x+\beta_{i_k}\\
y^{'} & = &\bar p_{i_k}y+\bar \beta_{i_k}\\
\end{array}
\right.
$$
are affine transformations for $i_k \in \{0, 1, \dots , q-1\}$ and $p_0, p_1, \dots , p_{q-1} \in (0, 1)$. Then the graph $\Gamma$ of  $g$ is the following set of $\mathbb R^2$
$$
\Gamma_{g}=\bigcup_{x \in [0,1]}{(\ldots \circ \psi_{i_{k}}\circ \ldots \circ \psi_{i_{2}} \circ \psi_{i_{1}}(x))}.
$$
\end{theorem}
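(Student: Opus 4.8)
The plan is to recognize $\Gamma_{g}$ as the invariant (attractor) set of the iterated function system $\{\psi_{i_k}\}$, matching the ``address'' of a graph point $(x,g(x))$ with the $P$-representation $i_1i_2\ldots$ of its abscissa $x$. Throughout I would read the infinite composite on the right-hand side with $\psi_{i_1}$ acting last, that is, as the limit $\lim_{n\to\infty}\psi_{i_1}\circ\psi_{i_2}\circ\dots\circ\psi_{i_n}$, where at level $k$ the map $\psi_{i_k}$ is the one whose ordinate component uses the variant of $\bar p_{i_k},\bar\beta_{i_k}$ corresponding to whether $k\in N_B$; the argument written as ``$x$'' is immaterial, since that limit will turn out to be a single point.

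First I would record the two elementary facts that give the infinite composite a meaning. Each $\psi_{i_k}$ is an affine contraction of $\mathbb R^2$: the abscissa is rescaled by $p_{i_k}\in(0,1)$ and the ordinate by $\bar p_{i_k}\in\{p_{i_k},p_{q-1-i_k}\}\subseteq(0,1)$. Moreover $\psi_{i_k}$ maps the unit square into itself, because $0\le\beta_{i_k}$ and $\beta_{i_k}+p_{i_k}=\beta_{i_k+1}\le1$ (and likewise with bars). Hence, for any base $c_1\ldots c_n$, the composite $\Psi_n:=\psi_{c_1}\circ\dots\circ\psi_{c_n}$ satisfies $\Psi_{n+1}([0,1]^2)\subseteq\Psi_n([0,1]^2)\subseteq[0,1]^2$ and has Lipschitz constant at most $(\max\{p_0,\dots,p_{q-1}\})^n\to0$, so the diameter of $\Psi_n([0,1]^2)$ tends to $0$; consequently $\bigcap_{n}\Psi_n([0,1]^2)$ is a single point and $\Psi_n(z)$ converges to it for every $z\in[0,1]^2$.

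Next I would extract the key recursive identity from the preceding theorem. By that theorem, $g$ is the unique bounded solution of system~\eqref{eq: system}, so for $x=\Delta^{P}_{i_1i_2\ldots}$ one has $g(x)=\bar\beta_{i_1}+\bar p_{i_1}g(\sigma(x))$, while $x=\beta_{i_1}+p_{i_1}\sigma(x)$ by the definition of the $P$-representation. These two equalities say precisely that
\[
(x,g(x))=\psi_{i_1}\bigl(\sigma(x),\,g(\sigma(x))\bigr).
\]
Applying the same identity to $\sigma(x)$ (whose first digit is $i_2$) and iterating yields
\[
(x,g(x))=\bigl(\psi_{i_1}\circ\psi_{i_2}\circ\dots\circ\psi_{i_n}\bigr)\bigl(\sigma^n(x),\,g(\sigma^n(x))\bigr)\qquad(n=1,2,\dots).
\]
Since $\sigma^n(x)\in[0,1]$ and $g(\sigma^n(x))=\Delta^{P}_{\bar i_{n+1}\bar i_{n+2}\ldots}\in[0,1]$, the argument on the right lies in $[0,1]^2$, so by the previous paragraph the right-hand side tends to the single point $\bigcap_{n}\Psi_n([0,1]^2)$. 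Thus $(x,g(x))$ equals the infinite composite indexed by $i_1i_2\ldots$, which gives $\Gamma_{g}\subseteq\bigcup_{x\in[0,1]}(\ldots\circ\psi_{i_k}\circ\ldots\circ\psi_{i_1}(x))$. Conversely, every one-sided sequence over $\{0,1,\dots,q-1\}$ is the $P$-representation of some $x\in[0,1]$, and the computation just made identifies the corresponding composite with $(x,g(x))\in\Gamma_{g}$; for $P$-rational $x$ the two admissible representations give the same value $g(x)$ under the agreed normalization (no period $(q-1)$), so the union is unchanged. Hence the two sets coincide.

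The hard part will be the first step: checking that each $\psi_{i_k}$ preserves $[0,1]^2$ and that the nested intersection collapses to a single point, since that is exactly what legitimizes the loosely written argument ``$(x)$'' and makes the infinite composite well defined. One must also be careful that at level $k$ the map $\psi_{i_k}$ is taken with the $k\in N_B$ / $k\notin N_B$ variant of $\bar p_{i_k},\bar\beta_{i_k}$; with that understanding the telescoping above is precisely the iteration of system~\eqref{eq: system}, and everything else is immediate.
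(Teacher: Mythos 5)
Your proposal is correct and follows essentially the same route as the paper: both identify $\Gamma_g$ with the attractor of the (level-dependent) affine system via the functional-equation system~\eqref{eq: system}, proving the inclusion $G\subseteq\Gamma_g$ from the self-affinity identity $(x,g(x))=\psi_{i_1}(\sigma(x),g(\sigma(x)))$ and the reverse inclusion by iterating this identity along $\sigma^n(x)$. Your write-up is in fact tighter than the paper's on two points the paper leaves implicit — the correct reading of the composition order (with $\psi_{i_1}$ outermost) and the justification that the nested images of $[0,1]^2$ shrink to a single point, which is what makes the infinite composite well defined.
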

\begin{proof} 
 Since $g$ is the continuous unique solution of the system~\eqref{eq: system}, it is clear that 
$$
\psi_{i_1}:
\left\{
\begin{aligned}
x^{'}&= p_{i_1}x+ \beta_{i_1}\\
y^{'}& = \bar{\beta}_{i_1}+\bar {p}_{i_1}y,\\
\end{aligned}
\right.
$$
$$
\psi_{i_2}:
\left\{
\begin{aligned}
x^{'}&=p_{i_2}x+ \beta_{i_2}\\
y^{'}& = \bar {\beta}_{i_2}+\bar {p}_{i_2}y,\\
\end{aligned}
\right.
$$
etc.
Therefore,
$$
\bigcup_{x \in [0;1]}{( ... \circ \psi_{i_k}\circ \ldots \circ \psi_{i_2} \circ \psi_{i_1}(x))}\equiv G \subset  \Gamma_{g},
$$
because
$$
g(x^{'})=g( \beta_{i_k}+ p_{i_k}x)=\bar \beta_{i_k}+\bar p_{i_k}y=y^{'}.
$$

Let $T(x_0,g(x_0))\in \Gamma_{g}$, $x_0=\Delta^{P} _{i_1i_2...i_k...}$ be  some fixed point from  $[0, 1]$. Let $x_k \in [0,1]$ such that $x_k=\sigma^k (x_0)$.

Since   the system~\eqref{eq: system} is true and the condition 
$\overline{T}\left(\sigma^{k}(x_0);g\left(\sigma^{k}(x_0)\right)\right)\in~\Gamma_{g}$ holds, it follows that 
$$
\psi_{i_k}\circ \ldots\circ \psi_{i_2} \circ \psi_{i_1}\left(\overline{T}\right)=T_0(x_0; g(x_0))\in \Gamma_{g}, ~~~i_k\in \{0, 1, \dots, q-1\},~~~k\to \infty.
$$

Whence, $\Gamma_{g}\subset G$. So,
$$
\Gamma_{g}=\bigcup_{x \in [0,1]}{(\ldots \circ \psi_{i_k}\circ \ldots \circ \psi_{i_2} \circ \psi_{i_1}(x))}.
$$
\end{proof}

Let us consider fractal properties of the graph $\Gamma_g$ of the function $g$.

\begin{theorem}
The Hausdorff dimension of the graph $\Gamma_g$ of the function $g$ is equal to $1$.
\end{theorem}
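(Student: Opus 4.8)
The plan is to prove the two estimates $\dim_H\Gamma_g\ge 1$ and $\dim_H\Gamma_g\le 1$ separately. The lower bound is immediate: the orthogonal projection $\pi\colon\mathbb R^2\to\mathbb R$, $\pi(x,y)=x$, is $1$-Lipschitz, and since $g$ is defined on the whole segment, $\pi(\Gamma_g)=[0,1]$; as a Lipschitz map cannot increase Hausdorff dimension, $1=\dim_H[0,1]=\dim_H\pi(\Gamma_g)\le\dim_H\Gamma_g$.

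For the upper bound I would fix $s>1$ and build economical coverings from the cylinder structure. Splitting the defining series of $g$ at the $m$-th term (equivalently, iterating the functional equation \eqref{eq: system} or the affine maps $\psi_{c_1},\dots,\psi_{c_m}$), one sees that for every base $c_1c_2\dots c_m$ the restriction of $g$ to $\Lambda^{P}_{c_1\dots c_m}$ equals a constant (depending only on $c_1,\dots,c_m$) plus $\bar p_{c_1}\cdots\bar p_{c_m}$ times a function of $\sigma^m(x)$ which is again of the $\pm P$ form (relative to the shifted subset $\{n-m:n\in N_B,\ n>m\}$) and therefore valued in $[0,1]$. Hence the part of $\Gamma_g$ lying over $\Lambda^{P}_{c_1\dots c_m}$ is contained in a closed rectangle of width $|\Lambda^{P}_{c_1\dots c_m}|=p_{c_1}\cdots p_{c_m}$ and height at most $\bar p_{c_1}\cdots\bar p_{c_m}$, and such a rectangle is covered by at most $1+\bigl(\bar p_{c_1}\cdots\bar p_{c_m}\bigr)\big/\bigl(p_{c_1}\cdots p_{c_m}\bigr)$ axis-parallel squares of side $p_{c_1}\cdots p_{c_m}$, all of diameter at most $\sqrt2\,\bigl(\max_j p_j\bigr)^m\to0$.

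Summing the $s$-th powers of the diameters over all bases of rank $m$ yields, up to the constant $2^{s/2}$,
\[
\sum_{c_1\dots c_m}\bigl(p_{c_1}\cdots p_{c_m}\bigr)^{s}+\sum_{c_1\dots c_m}\bigl(\bar p_{c_1}\cdots\bar p_{c_m}\bigr)\bigl(p_{c_1}\cdots p_{c_m}\bigr)^{s-1}=\Bigl(\sum_{c}p_c^{s}\Bigr)^{m}+\prod_{j=1}^{m}A_j,
\]
where $A_j=\sum_{c}p_c^{s}$ when $j\notin N_B$ and $A_j=\sum_{c}p_{q-1-c}\,p_c^{s-1}$ when $j\in N_B$. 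Because $0<p_c<1$ and $s-1>0$, one has $\sum_{c}p_c^{s}<\sum_{c}p_c=1$ and $\sum_{c}p_{q-1-c}\,p_c^{s-1}<\sum_{c}p_{q-1-c}=1$; setting $\rho:=\max\{\sum_{c}p_c^{s},\ \sum_{c}p_{q-1-c}\,p_c^{s-1}\}<1$ makes each of the two summands at most $\rho^{m}$, so the whole expression is at most $2\rho^{m}\to0$. Hence $\mathcal H^{s}(\Gamma_g)=0$ for every $s>1$, i.e. $\dim_H\Gamma_g\le1$, which together with the lower bound gives $\dim_H\Gamma_g=1$.

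The step I expect to be the real obstacle is the vertical control over a rank-$m$ cylinder, namely that $\Gamma_g\cap\bigl(\Lambda^{P}_{c_1\dots c_m}\times\mathbb R\bigr)$ has height at most $\bar p_{c_1}\cdots\bar p_{c_m}$. When $N_B\notin\{\emptyset,\mathbb N\}$ the function $g$ is non-monotone and may have unbounded variation, so one cannot argue via finiteness of length; instead one must use carefully that the tail function appearing after the affine change of variables is a genuine $\pm P$-expansion of a number in $[0,1]$, so that the internal jumps of $g$ on the cylinder stay inside the rectangle. A secondary, purely technical point is that the covering squares have base-dependent sizes, which is harmless because a $\delta$-cover constrains only individual diameters; and when $N_B\in\{\emptyset,\mathbb N\}$ the function is monotone, $\mathcal H^{1}(\Gamma_g)<\infty$, and the upper bound is immediate.
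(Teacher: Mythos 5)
Your proof is correct, and although it starts from the same rank-$m$ cylinder/rectangle decomposition as the paper, both halves are executed quite differently. For the lower bound the paper essentially just asserts $\alpha_0(\Gamma_g)\ge 1$ "since the graph has self-similar properties''; your projection argument ($\pi(\Gamma_g)=[0,1]$ and Lipschitz maps do not increase Hausdorff dimension) is the standard clean justification and is a genuine addition. For the upper bound the paper replaces the rank-$r$ rectangles of sides $\prod_t p_{c_t}$ and $\prod_t\bar p_{c_t}$ by $q^r$ uniform squares of side $a^r=(\min_j p_j)^r$ or $b^r=(\max_j p_j)^r$ and studies $(qa^{\alpha})^r$ and $(qb^{\alpha})^r$; but $qb^{\alpha}<1$ forces $\alpha>\log q/\log(1/b)\ge 1$ (since $b\ge 1/q$), so that computation only yields $\dim_H\Gamma_g\le\log q/\log(1/b)$, which exceeds $1$ unless all $p_j=1/q$ — the paper's conclusion "$\alpha_1=1$'' is not actually derived. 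Your version avoids this by keeping the base-dependent widths $p_{c_1}\cdots p_{c_m}$, counting the $1+h/w$ squares genuinely needed over each cylinder, and factoring the resulting $s$-dimensional sum into $\bigl(\sum_c p_c^{s}\bigr)^m+\prod_j A_j\le 2\rho^m\to 0$ with $\rho=\max\bigl\{\sum_c p_c^{s},\ \sum_c p_{q-1-c}p_c^{s-1}\bigr\}<1$; this correctly gives $\mathcal H^{s}(\Gamma_g)=0$ for every $s>1$ and arbitrary admissible $P$. The one step you flag as delicate — that the oscillation of $g$ on $\Lambda^{P}_{c_1\dots c_m}$ is at most $\bar p_{c_1}\cdots\bar p_{c_m}$ — is exactly what the iterated functional equation $g(x)=\bar\beta_{i_1}+\bar p_{i_1}g(\sigma(x))$ provides, because $\bar\beta_{i_k}=\beta_{\bar i_k}$ and $\bar p_{i_k}=p_{\bar i_k}$, so the tail factor is the $P$-value $\Delta^{P}_{\bar i_{m+1}\bar i_{m+2}\dots}\in[0,1]$; monotonicity of $g$ is never needed. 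In short, your route is a rigorous repair and completion of the paper's sketch rather than a retelling of it.
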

\begin{proof}
Using the definition of the  fractal cell entropy dimension  and the definition of $g$, one can see that $\Gamma_g$  belongs to $q$ rectangles (with sides $p_{i_1}$ and $\bar p_{i_1}$) from  $q^2$
first-rank rectangles:
$$
I_{[i_1, \bar i_1]}=\left[\beta_{i_1}, \beta_{i_1+1}\right]\times\left[\min\{\bar \beta_{i_1}, \bar \beta_{i_1+1}\};\max\{\bar \beta_{i_1}, \bar \beta_{i_1+1}\} \right],
$$
where $0\le \beta_{i_1}, \beta_{i_1+1}, \bar \beta_{i_1}, \bar \beta_{i_1+1}\le \beta_q=1$ and $i_1\in\{0, 1, \dots, q-1\}$.

The graph of  the function $g$ belongs to $q^2$ rectangles (with sides $p_{i_1}p_{i_2}$ and $\bar p_{i_1}\bar p_{i_2}$) from $q^4$ rectangles of rank $2$:
$$
I_{[i_1, \bar i_1][i_2,\bar i_2]}=\left[\beta_{i_1}+\beta_{i_2}p_{i_1}, \beta_{i_1}+\beta_{i_2+1}p_{i_1}\right] \times\left[\min R_2; \max R_2 \right], 
$$
where $R_2= \{\bar \beta_{i_1}+\bar \beta_{i_2}\bar p_{i_1}, \bar \beta_{\i_1}+\bar \beta_{i_2+1}\bar p_{i_1}\}$.
In addition, one can note the following:
\begin{itemize}
\item the part of the graph, which is in the rectangle $I_{[0, \bar 0]}$, belongs to $q$ rectangles  
$$
I_{[0,\bar 0][0, \bar 0]}, I_{[0, \bar 0][(1, \bar 1]}, \dots , I_{[0, \bar 0][q-1, \overline{q-1}]};
$$
\item  the part of the graph, which is in the rectangle $I_{[1, \bar 1]}$, belongs to $q$ rectangles  
$$
I_{[1,\bar 1][0, \bar 0]}, I_{[1, \bar 1][(1, \bar 1]}, \dots , I_{[1, \bar 1][q-1, \overline{q-1}]};
$$
\item $\dots \dots \dots \dots \dots \dots \dots \dots \dots \dots \dots \dots \dots \dots \dots \dots \dots \dots \dots \dots \dots$
\item  the part of the graph, which is in the rectangle $I_{[q-1, \overline{q-1}]}$, belongs to $q$ rectangles  
$$
I_{[q-1,\overline{q-1}][0, \bar 0]}, I_{[q-1, \overline{q-1}][(1, \bar 1]}, \dots , I_{[q-1, \overline{q-1}][q-1, \overline{q-1}]}.
$$
\end{itemize}

In the $r$th step, the graph of  the function $g$ belongs to $q^r$ rectangles (with sides $\prod^{r} _{t=1}{p_{i_t}}$ and $\prod^{r} _{t=1}{\bar p_{i_t}}$) from $q^{2r}$ rectangles of rank $r$. 

Whence,
$$
\widehat{H}_{\alpha} (\Gamma_g)=\lim_{\overline{r \to \infty}}{\sum_{j=\overline{1, r},~~ p_{i_j}\in  P}\left(\sqrt{\left(\prod^{r} _{j=1}{p_{i_j}}\right)^2+\left(\prod^{r} _{j=1}{\bar p_{i_j}}\right)^2}\right)^{\alpha^K (\Gamma_f)}},
$$
where $\alpha^K (E)$ is the fractal cell entropy dimension of the set $E$ (see \cite{Serbenyuk-2016} and references therein).

Suppose $a=\min \{p_0, p_1, \dots , p_{q-1}\}$ and $b=\max \{p_0,  p_1, \dots , p_{q-1}\}$. The value of $\widehat{H}_{\alpha} (\Gamma_g)$ is between values of $\widehat{H}_{\alpha} (\Gamma_g)$ for the case of \emph{squares} of rank $r$ with sides $a^r$ and $b^r$ correspondently. Hence using the number of such squares and their diameters, let us evaluate the following values. Really, 
$$
\widehat{H}_{\alpha, a} (\Gamma_g)=\lim_{\overline{r \to \infty}}{q^r\left(\sqrt{(a^r)^2+(a^r)^2}\right)^{\alpha}}=\lim_{\overline{r \to \infty}}{q^r\left(\sqrt{2(a^r)^2}\right)^{\alpha}}=\lim_{\overline{r \to \infty}}\left(2^{\frac{\alpha}{2}}\left(qa^{\alpha}\right)^{r}\right) 
$$
and
$$
\widehat{H}_{\alpha, b} (\Gamma_g)=\lim_{\overline{r \to \infty}}{q^r\left(\sqrt{(b^r)^2+(b^r)^2}\right)^{\alpha}}=\lim_{\overline{r \to \infty}}{q^r\left(\sqrt{2(b^r)^2}\right)^{\alpha}}=\lim_{\overline{r \to \infty}}\left(2^{\frac{\alpha}{2}}\left(qb^{\alpha}\right)^{r}\right).
$$
Since the graph of our function has self-similar properties,  $\left(qa^{\alpha}\right)^{r}\to 0$ and$\left(qb^{\alpha}\right)^{r}\to 0$ for $r\to \infty$ and for large $\alpha>1$, we obtain $1=\alpha_2 \le \alpha_0(\Gamma_g)\le \alpha_1=1$, where $\alpha_0$ is the Hausdorff dimension. 
\end{proof}

\section{Integral properties}
\begin{theorem}
For the Lebesgue integral, the following equality holds:
$$
\int^1 _0 {g(x)dx}=\frac{\sum^{q-1}_{t=0}{\bar \beta_{t} p_t}}{1-\sum^{q-1} _{t=0}{\bar p_{t}p_t}}.
$$
\end{theorem}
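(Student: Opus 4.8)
The plan is to turn the integral into a single linear equation for the unknown
$$I:=\int_0^1 g(x)\,dx$$
by exploiting the self-affine structure of $g$ given by the first instance ($n=1$) of the system \eqref{eq: system}, namely
$$g(\beta_t+p_tu)=\bar\beta_t+\bar p_t\,g(u),\qquad u\in[0,1],\ t=\overline{0,q-1},$$
which is precisely the action of the maps $\psi_{i_1}$ on the graph $\Gamma_g$ from the theorem describing $\Gamma_g$. Before invoking it I would note that $g$ is Lebesgue integrable: it is bounded, $0\le g\le 1$, and by the continuity theorem above its set of discontinuities is at most countable, so $g$ is Riemann (hence Lebesgue) integrable on $[0,1]$.

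Next I would split $[0,1]$ into the first-rank cylinders $\Lambda^P_t=[\beta_t,\beta_{t+1}]$, $t=\overline{0,q-1}$, which have pairwise disjoint interiors, cover $[0,1]$, and satisfy $|\Lambda^P_t|=p_t$; thus $I=\sum_{t=0}^{q-1}\int_{\beta_t}^{\beta_{t+1}}g(x)\,dx$. On $\Lambda^P_t$ I perform the affine substitution $x=\beta_t+p_tu$ (so $u=\sigma(x)$ ranges over $[0,1]$ and $dx=p_t\,du$); since the displayed relation holds for every $P$-irrational $u$, a set of full Lebesgue measure, it may be used under the integral sign, giving
$$\int_{\beta_t}^{\beta_{t+1}}g(x)\,dx=p_t\int_0^1\left(\bar\beta_t+\bar p_t\,g(u)\right)du=\bar\beta_tp_t+\bar p_tp_t\,I.$$
Summing over $t$ produces $I=\sum_{t=0}^{q-1}\bar\beta_tp_t+I\sum_{t=0}^{q-1}\bar p_tp_t$, that is,
$$I\left(1-\sum_{t=0}^{q-1}\bar p_tp_t\right)=\sum_{t=0}^{q-1}\bar\beta_tp_t .$$

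It remains to divide, so I must check that $1-\sum_{t=0}^{q-1}\bar p_tp_t\neq 0$ — the only non-routine point. Since $\bar p_t\le\max_j p_j$ for every $t$, we get $\sum_{t=0}^{q-1}\bar p_tp_t\le\left(\max_j p_j\right)\sum_{t=0}^{q-1}p_t=\max_j p_j<1$, the strict inequality holding because $q>1$ together with $p_i>0$ for all $i$ forbids any $p_j$ from equalling $1$. Hence the bracket is positive and division yields
$$\int_0^1 g(x)\,dx=\frac{\sum_{t=0}^{q-1}\bar\beta_tp_t}{1-\sum_{t=0}^{q-1}\bar p_tp_t},$$
as claimed. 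I expect the whole write-up to be short: the bookkeeping for the cylinder decomposition, the validity of the substitution off a null set, and the positivity of the denominator are the only things that need a careful word, and none of them is hard.
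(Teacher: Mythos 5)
Your proposal is correct and follows essentially the same route as the paper: decompose $[0,1]$ into the first-rank cylinders, apply the self-affine relation $g(\beta_t+p_tu)=\bar\beta_t+\bar p_t g(u)$ under the substitution $u=\sigma(x)$, and solve the resulting linear equation $I=U+WI$. Your explicit verification that $1-\sum_t\bar p_tp_t>0$ is a small but welcome addition the paper only uses implicitly when summing its geometric series.
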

=======================================

\begin{proof}
 Let us begin with some equalities which are useful for the future calculations:
$$
x=\beta_{i_1}+p_{i_1}\sigma(x)
$$
and
$$
dx=p_{i_1}d(\sigma(x)),
$$
as well as
$$
d(\sigma^{n-1} (x))=p_{i_n}d(\sigma^{n} (x))
$$
for all $n=1, 2, 3, \dots$

Let us calculate the Lebesgue integral
$$
I:=\int^1 _0{g(x)dx}=\sum^{q-1} _{t=0}{\int^{\beta_{t+1}} _{\beta_t}{g(x)dx}}=\sum^{q-1} _{t=0}{\int^{\beta_{t+1}} _{\beta_t}{(\bar \beta_{t}+\bar p_{t}g(\sigma(x))})dx}
$$
$$
=\bar \beta_{0}p_0+\bar \beta_{1}p_1+\dots + \bar \beta_{q-1}p_{q-1}+\sum^{q-1} _{t=0}{\int^{\beta_{t+1}} _{\beta_t}{\bar p_{t}g(\sigma(x))dx}}
$$
$$
=\sum^{q-1}_{t=0}{\bar \beta_{t}p_t}+\sum^{q-1} _{t=0}{\bar p_{t}\int^{\beta_{t+1}} _{\beta_t}{p_tf(\sigma(x))d(\sigma(x))}}
$$
$$
=\sum^{q-1}_{t=0}{\bar \beta_{t}p_t}+\sum^{q-1} _{t=0}{\bar p_{t}p_t\int^{1} _{0}{g(\sigma(x))d(\sigma(x))}}.
$$

Let us denote
$$
U:= \sum^{q-1}_{t=0}{\bar \beta_{t}p_t},
$$
$$
W:= \sum^{q-1} _{t=0}{\bar p_{t}p_t},
$$
and
$$
I_n:=\int^{1} _{0}{g(\sigma^n(x))d(\sigma^n(x))}.
$$
Then we obtain
$$
I= U+WI=U+W(U+WI^2)=U+UW+W^2(U+WI^3)=\dots =\frac{U}{1-W}.
$$

So,
$$
I=\frac{\sum^{q-1}_{t=0}{\beta_{\theta(t)}p_t}}{1-\sum^{q-1} _{t=0}{p_{\theta(t)}p_t}}.
$$
\end{proof}

\section*{Statements and Declarations}
\begin{center}
{\bf{Competing Interests}}

\emph{The author states that there is no conflict of interest.}
\end{center}

\begin{center}
{\bf{Data Availability Statement}}

\emph{There are not suitable for this research.}
\end{center}

\end{document}